\newtheorem{theorem}{Theorem}[section]
\newtheorem{lemma}[theorem]{Lemma}
\newtheorem{proposition}[theorem]{Proposition}
\renewcommand{\Re}{\operatorname{Re}}
\renewcommand{\Im}{\operatorname{Im}}
\newcommand{\li}{\operatorname{li}}
\newcommand{\Sclass}{{\mathcal{S}}}
\newcommand{\ve}{\varepsilon}
\begin{document}
\title{Large values of L-functions from the Selberg class}

\author{Christoph Aistleitner}
\address{Institute of Analysis and Number Theory, TU Graz, Steyrergasse 30/II, 8010 Graz, Austria}
\email{aistleitner@math.tugraz.at}

\author{{\L}ukasz Pa\'nkowski}
\address{Faculty of Mathematics and Computer Science, Adam Mickiewicz University, Umultowska 87, 61-614 Pozna\'{n}, Poland, and Graduate School of Mathematics, Nagoya University, Nagoya, 464-8602, Japan}
\email{lpan@amu.edu.pl}

\thanks{The first author was supported by the Austrian Science Fund (FWF), projects I1751-N26, F5507-N26 and Y-901-N35. The second author was partially supported by (JSPS) KAKENHI grant no. 26004317 and the grant no. 2013/11/B/ST1/02799 from the National Science Centre.}

\subjclass[2010]{Primary: 11M41}

\keywords{Selberg class, $L$-function, large values, resonance method.
}

\begin{abstract}
In the present paper we prove lower bounds for L-functions from the Selberg class, by this means improving earlier results obtained by the second author together with J\"orn Steuding. We formulate two theorems which use slightly different technical assumptions, and give two totally different proofs. The first proof uses the ``resonance method'', which was introduced by Soundararajan, while the second proof uses methods from Diophantine approximation which resemble those used by Montgomery. Interestingly, both methods lead to roughly the same lower bounds, which fall short of those known for the Riemann zeta function and seem to be difficult to be improved. Additionally to these results, we also prove upper bounds for L-functions in the Selberg class and present a further application of a theorem of Chen which is used in the Diophantine approximation method mentioned above.
\end{abstract}

\maketitle

\section{Introduction.}

It is well known that the absolute value of the Riemann zeta function $\zeta(\sigma+it)$ takes arbitrarily large and arbitrarily small values when $t$ runs through the real numbers and $\sigma\in [1/2,1)$ is a fixed real number. However, the growth of the Riemann zeta function as a function of $t$ (for fixed $\sigma$) cannot be too fast, since its absolute value is bounded by a power of $t$. More precisely, if $\mu_{\zeta}(\sigma)$ denotes the infimum over all $c\geq 0$ satisfying $\zeta(\sigma+it)\ll t^{c}$ for sufficiently large $t$, then one can show that $\mu_{\zeta}(\sigma)\leq (1-\sigma)/2$ for $0\leq \sigma\leq 1$. Although the upper bound for $\mu_{\zeta}(\sigma)$ has been improved by many mathematicians, especially for $\sigma = 1/2$, it is yet unproved (but widely believed) that $\mu_{\zeta}(\sigma)=0$ (for more details we refer to \cite{I} or \cite{T}). As evidence for the truth of this conjecture one can regard the Riemann hypothesis, which implies that
\[
\log\zeta(\sigma+it)\ll \frac{(\log t)^{2-2\sigma}}{\log\log t},\qquad \text{for} \quad  \frac{1}{2} \leq \sigma <1.
\]

Therefore, it is natural to ask for omega results on $\zeta(\sigma+it)$. The first answer was given by Titchmarsh (see \cite[Theorem 8.12]{T}), who proved that for any $\sigma\in [1/2,1)$ and every $\varepsilon>0$ the inequality $|\zeta(\sigma+it)|>\exp\left((\log t)^{1-\sigma-\varepsilon}\right)$ holds for arbitrarily large values of $t$. In 1977, Montgomery \cite{M} improved this result for $\sigma\in(1/2,1)$ by proving that for any fixed $\sigma\in (1/2,1)$ and every sufficiently large $T$ there exists $t$ such that $T^{(\sigma-1/2)/3}\leq t\leq T$ and 
\begin{equation} \label{mont}
|\zeta(\sigma+it)| \geq \exp\left(\frac{1}{20}\left(\sigma-\frac{1}{2}\right)^{1/2} \frac{(\log T)^{1-\sigma}}{(\log\log T)^{\sigma}}\right).
\end{equation}
Moreover, he showed that under the Riemann Hypothesis the above inequality can be extended to $\sigma\in[1/2,1)$ with a slightly better constant and better range of $t$.

The first unconditional proof of Montgomery's theorem for $\sigma=1/2$ was given by Balasubramanian and Ramachandra \cite{BR}. The best result currently known is due to Bondarenko and Seip \cite{BS}, who very recently achieved a breakthrough by proving that 
\[
\max_{T^{1/2}\leq t\leq T}\left|\zeta\left(\frac{1}{2}+it\right)\right|\geq \exp\left(\left(\frac{1}{\sqrt{2}}+o(1)\right)\sqrt{\frac{\log T\log\log\log T}{\log\log T}}\right).
\]
Their proof is based on the so-called resonance method, which was introduced by Soundararajan \cite{So}, and on a connection between extreme values of the Riemann zeta function and certain sums involving greatest common divisors (GCD sums). This connection was discovered by Hilberdink \cite{H}. Recently, the first author \cite{A} succeeded in applying the resonance method with an extremely long resonator such that he could recapture Montgomery's results by the resonance method, off the critical line, an idea which also plays a crucial role in the omega result of Bondarenko and Seip.\\

Similar problems of finding extreme values were also investigated for other zeta and $L$-functions, and it was shown that Montgomery's approach can be applied to some generalizations of the Riemann zeta function. For example, Balakrishnan \cite{B} showed that Dedekind zeta functions take large values of order $\exp(c (\log T)^{1-\sigma}/(\log\log T)^\sigma)$, and Sankaranarayanan and Sengupta \cite{SS} generalized Montgomery's theorem to a wide class of $L$-functions defined by Dirichlet series with real coefficients under some natural analytic and arithmetic conditions.

Recently, the second author and Steuding \cite{PS} investigated further refinements of Montgomery's reasoning and proved that for every $L$-function $L(s)=\sum_{n\geq 1}a_L(n)n^{-s}$ from the Selberg class which satisfies $L(s)\ne 0$ for $\sigma>1/2$ we have
\begin{equation} \label{pan-st}
\max_{t\in [T,2T]} |L(\sigma+it)|\geq \exp\left(c\frac{(\log T)^{1-\sigma}}{(\log\log T)^{2-\sigma}}\right)
\end{equation}
for some explicitly given constant $c>0$ and sufficiently large $T$, under the additional assumption that the coefficients of $L$ satisfy a prime number theorem with remainder term in the form
\begin{equation}\label{SPNT}
\sum_{p\leq x}|a_L(p)| = \kappa\frac{x}{\log x} + \mathcal{O}\left(\frac{x}{\log^2 x}\right),\qquad (\kappa >0).
\end{equation}
Note that Montgomery's argument requires a prime number theorem in order to get a lower bound for the sum of $|a_L(p)|$ over primes in some interval, which might be estimated from below by the sum of $|a_L(p)|^2$, provided $|a_L(p)|\ll 1$. Hence, the condition \eqref{SPNT} can be replaced by the more natural assumption that $L$ has a polynomial Euler product and satisfies the Selberg normality conjecture in the stronger form
\begin{equation}\label{SNC}
\sum_{p\leq x}|a_L(p)|^2 = \kappa\frac{x}{\log x} + \mathcal{O}\left(\frac{x}{\log^2 x}\right),\qquad (\kappa >0).
\end{equation}

The main difference between Montgomery's proof and the proof in \cite{PS} is that due to the appearance of non-real coefficients $a_L(n)$ one requires an \emph{inhomogeneous} Diophantine approximation theorem (which is used in an effective form due to Weber \cite{W}), while in the case of the Riemann zeta function one has positive real coefficients and can use classical results from \emph{homogeneous} Diophantine approximation. This difference also explains the fact why \eqref{pan-st} has a worse exponent of the $\log \log$ term inside the exponential function than the one appearing in \eqref{mont}.\\

Recall that the Selberg class $\Sclass$ consists of those functions $L(s)$ defined by a Dirichlet series $\sum_{n=1}^\infty a_L(n)n^{-s}$ in the half-plane $\Re s > 1$ which satisfy the following axioms:
\begin{enumerate}[(i)]
\item [(i)] {\it Ramanujan hypothesis:} $a_L(n)\ll_\varepsilon n^\varepsilon$ for every $\varepsilon>0$;
\item [(ii)] {\it analytic continuation:} there exists a non-negative integer $m$ such that $(s-1)^mL(s)$ is an entire function of finite order;
\item [(iii)] {\it functional equation:} $L(s)$ satisfies the following functional equation
\[
\Lambda(s) = \theta\overline{\Lambda(1-\overline{s})},
\]
where
\[
\Lambda(s):=L(s)Q^s\prod_{j=1}^k\Gamma(\lambda_js+\mu_j),
\]
$|\theta|=1$, $Q\in\mathbb{R}$, $\lambda_j\in\mathbb{R}_+$, and $\mu_j\in\mathbb{C}$ with $\Re \mu_j \geq 0$;
\item [(iv)] {\it Euler product:} for prime powers $p^j$ there exist complex numbers $b(p^j)$ such that for $\Re s>1$ we have 
$$
L(s) = \prod_p L_p(s), \qquad \textrm{where} \qquad L_p(s)=\exp \left( \sum_{j=1}^\infty \frac{b(p^j)}{p^{js}} \right),
$$
and such that $b(p^j) \ll p^{j \delta}$ for some $\delta < 1/2$. 
\end{enumerate}
~\\

Some of our results will be for the \emph{Selberg class with polynomial Euler product} denoted by $\mathcal{S}'$, for which axiom (iv) in the list above is replaced by axiom (iv') below: 
\begin{enumerate}
\item [(iv')] {\it polynomial Euler product:} for $\Re s>1$ we have
\begin{equation}\label{eq:EulerProd}
L(s) = \prod_p\prod_{j=1}^m\left(1-\frac{\alpha_j(p)}{p^s}\right)^{-1},
\end{equation}
where $\alpha_j(p)$ are complex numbers.
\end{enumerate}
~\\

It is easy to notice that under axiom (iv') for any positive integers $\beta_j$ we have
\[
a_L\left(\prod_{j=1}^n p_j^{\beta_j}\right) =\prod_{j=1}^n\sum_{\substack{k_1,\ldots,k_m\geq 0\\k_1+\ldots+k_m=\beta_j}}\prod_{i=1}^m\alpha_i(p_j)^{k_j}.
\]
Hence (see \cite[Lemma 2.2]{S}) under axiom (iv') the axiom (i) is equivalent to the assumption that $|\alpha_j(p)|\leq 1$ for $j=1,2,\ldots,m$ and every prime $p$. Moreover, one can easily observe that in this case
\[
|a_L(p)|<m\qquad\text{for all primes $p$},
\]
where $m$ is closely related to the degree $d_L = 2\sum_{j=1}^m \lambda_j$, since it is widely believed that most important $L$-functions satisfy a functional equation with $k=m$ and $\lambda_j = 1/2$. We will need to work with $\Sclass'$ instead of the original Selberg class $\Sclass$ only in the application of the resonance method, which requires bounded coefficients $a_L(p)$. However, it seems that this restriction does not exclude any important $L$-function in number theory, since so far all known examples of $L$-functions like the Riemann zeta function, Dirichlet $L$-functions, Hecke
L-functions, as well as the (normalized) L-functions of
the holomorphic modular form and general automorphic L-functions
have polynomial Euler products and, at least under some widely believed conjectures, are elements of $\Sclass'$. Moreover, it should be noted that although other results in the paper are stated under the assumption $L\in\Sclass$ and some version of \eqref{SPNT} holds, one can easily show that they can be proved in a slightly weaker form if we replaced these assumptions by assuming that $L\in\Sclass'$ and 
some variant of \eqref{SNC} is true, which seems to be a slightly more natural requirement than~\eqref{SPNT}. \\

It is well known that using the Phragm{\'e}n-–Lindel\"of principle one can show (see, for example, \cite[Theorem 6.8]{S}) that, similarly to the case of the Riemann zeta function, all $L$-functions from the Selberg class have polynomial order of growth inside the critical strip, namely
\begin{equation}\label{fromPhL}
L(\sigma+it) \ll t^{\frac{d_L}{2}(1-\sigma)+\varepsilon},\qquad 0\leq \sigma\leq 1,\ t\geq t_0>0,
\end{equation}
where $d_L$ denotes the degree of $L$ and the numbers $\lambda_j$ are defined by the functional equation satisfied by $L(s)$. Moreover, it is conjectured that all elements of the Selberg class satisfy an analogue of the Riemann Hypothesis, which can be used to give the upper bound on the maximal values taken by $L(s)$ stated below.

\begin{proposition}\label{prop:1}
Assume that $\Sclass \ni L(s)\ne 0$ in the half-plane $\Re s>1/2$ and satisfies 
\begin{equation}\label{prime_prop}
\sum_{p \leq x} |a_L(p)| = (\kappa+o(1)) \frac{x}{\log x} \qquad(\kappa>0).
\end{equation}
Then for every fixed $\sigma \in [1/2,1)$ there exists a constant $c>0$ such that
\[
L(\sigma+it) \ll \exp\left(c\frac{(\log t)^{2-2\sigma}}{\log\log t}\right).
\]
Moreover, if we assume \eqref{SPNT}, we have
\[
L(1+it)\ll (\log\log t)^\kappa.
\]
\end{proposition}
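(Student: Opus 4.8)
The plan is to transfer Littlewood's conditional argument for the Riemann zeta function (see \cite[Chapter~14]{T}) to the Selberg class. First I would observe that, since $L(s)\neq 0$ for $\Re s>1/2$, the branch of $\log L(s)$ which tends to $0$ as $\Re s\to+\infty$ is holomorphic in $\Re s>1/2$, and by axiom~(ii) it extends holomorphically to $\{\Re s>1/2\}\setminus\{1\}$. Writing $-\tfrac{L'}{L}(s)=\sum_{n\geq2}\Lambda_L(n)n^{-s}$, the coefficients $\Lambda_L$ are supported on prime powers, with $\Lambda_L(p)=a_L(p)\log p$ and $|\Lambda_L(p^j)|=|b(p^j)|\log p\ll p^{j\delta}\log p$, $\delta<1/2$ (axiom~(iv)). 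The key estimate I would aim to establish is the following Littlewood-type inequality: for every fixed $\sigma\in(1/2,1)$, every $x\geq 2$ and all sufficiently large $t$,
\begin{equation}\label{eq:littlewood}
\log\bigl|L(\sigma+it)\bigr|\ \ll_\sigma\ \sum_{p\leq x}\frac{|a_L(p)|}{p^\sigma}\ +\ \frac{x^{1/2-\sigma}\log t}{\log x}\ +\ o(1).
\end{equation}
I would derive \eqref{eq:littlewood} by starting from $\log L(\sigma+it)=-\int_\sigma^\infty\tfrac{L'}{L}(u+it)\,du$, inserting into the integrand the standard truncated Perron/contour representation of $\tfrac{L'}{L}$, and shifting the contour to the right of (but close to) the critical line: one crosses the double pole at the origin, the pole at $s=1$ coming from axiom~(ii), and, on bounding the remaining integral, one meets the nontrivial zeros of $L$. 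The ``$\log t$'' in \eqref{eq:littlewood} records the number of zeros $\rho$ of $L$ with $|\Im\rho-t|\leq 1$, which is $\ll\log t$ by the functional equation (axiom~(iii)); the prime powers $p^j$, $j\geq2$, contribute only $O(x^{1/2}/\log x)$ to the Dirichlet polynomial and are absorbed into the $o(1)$; and the remainder integral, the pole term and the truncation error would be controlled using the polynomial growth bound \eqref{fromPhL} together with the classical estimate $\arg L(\sigma'+it)\ll\log t$ on any line $\Re s=\sigma'>1/2$.

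Granting \eqref{eq:littlewood}, the first assertion would follow by optimising $x$. By \eqref{prime_prop} and partial summation, $\sum_{p\leq x}|a_L(p)|p^{-\sigma}\ll_\sigma x^{1-\sigma}/\log x$ for fixed $\sigma\in(1/2,1)$, so taking $x=(\log t)^2$ makes both main terms on the right of \eqref{eq:littlewood} of size $\asymp(\log t)^{2-2\sigma}/\log\log t$, which yields $L(\sigma+it)\ll\exp\bigl(c(\log t)^{2-2\sigma}/\log\log t\bigr)$ for $\sigma\in(1/2,1)$. The endpoint $\sigma=1/2$ would be treated in the same spirit, with the extra input of the Hadamard three-lines theorem applied to the subharmonic function $\log|L|$ on a short box straddling $\Re s=1/2$, using $\log|L|\ll\log t$ on a fixed line $\Re s=\sigma_0\in(0,1/2)$ from \eqref{fromPhL} (this is the standard refinement, cf.\ \cite[Chapter~14]{T}). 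For the last assertion one cannot afford the triangle inequality on the prime sum: at $\sigma=1$ I would work directly with the underlying identity $\log L(1+it)=\sum_{n\leq x}\Lambda_L(n)n^{-1-it}(\log n)^{-1}\bigl(1-\tfrac{\log n}{\log x}\bigr)+(\text{error})$ and take real parts. Choosing $x=t^{1/2}$ makes every error term $O(1)$, while \eqref{SPNT} and partial summation give
\[
\sum_{p\leq x}\frac{|a_L(p)|}{p}=\kappa\log\log x+O(1)=\kappa\log\log t+O(1),
\]
so $\log|L(1+it)|\leq\kappa\log\log t+O(1)$, i.e.\ $L(1+it)\ll(\log\log t)^\kappa$.

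The main obstacle will be the proof of \eqref{eq:littlewood} (and its $\sigma=1$ analogue) with exactly the error term stated, that is, the careful bookkeeping of the contour shift near $\Re s=1/2$: one must check that, after integration in $u$, the nontrivial zeros contribute only $O_\sigma\bigl(x^{1/2-\sigma}\log t/\log x\bigr)$ --- this is where the \emph{absolute} convergence of $\sum_\rho\bigl((\sigma-\tfrac12)^2+(\Im\rho-t)^2\bigr)^{-1}\ll_\sigma\log t$, a consequence of axiom~(iii), enters --- and that the remainder integral, the pole at $s=1$ and the truncation error are genuinely of lower order, for which \eqref{fromPhL} and the bound on $\arg L$ should suffice. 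Extracting the saving of a factor $\log\log t$ over the trivial bound (rather than merely the exponent $(\log t)^{2-2\sigma}$) is the delicate point, and I would follow the treatment in \cite[Chapter~14]{T} there. Two structural remarks: the hypothesis $L\neq0$ for $\Re s>1/2$ is used in an essential way, since it makes $\log L$ available and places the zeros on $\Re s=1/2$, whence $|x^{\rho-s}|=x^{1/2-\sigma}$; and the stronger hypothesis \eqref{SPNT} is needed beyond \eqref{prime_prop} only for the second assertion, precisely to upgrade $\sum_{p\leq x}|a_L(p)|p^{-1}=\kappa\log\log x+o(\log\log x)$ to the same identity with error $O(1)$, without which one would obtain only $L(1+it)\ll(\log\log t)^{\kappa+o(1)}$.
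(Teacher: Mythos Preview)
Your plan is sound and follows the same overall strategy as the paper: both transfer Littlewood's conditional argument from \cite[Chapter~14]{T} to the Selberg class, using that under the RH analogue the zeros sit on $\Re s=1/2$ so their contribution carries a factor $x^{1/2-\sigma}$, and that $N_L(T+1)-N_L(T)\ll\log T$ (from axiom~(iii)) controls the zero sum. The implementations differ in detail: the paper first obtains a preliminary bound $\log L(\sigma+it)\ll(\log t)^{2-2\sigma}\log\log t$ via Borel--Carath\'eodory and Hadamard's three-circles theorem, then derives $L'/L\ll(\log t)^{2-2\sigma}$ from a $\Gamma(z-s)$--smoothed explicit formula (with smoothing parameter $\delta=(\log t)^{-2}$) and integrates in $\sigma$ to gain the $1/\log\log t$; you instead aim directly at a single Littlewood-type inequality \eqref{eq:littlewood} via a Perron-truncated formula for $\log L$ and then choose $x=(\log t)^2$. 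Both routes are standard and yield the same bound; your version is somewhat more direct, while the paper's two-step route makes the passage to $\sigma=1/2$ slightly more transparent (the three-circles step already covers $\sigma$ down to $1/2+1/\log\log t$). For the $\sigma=1$ statement the two arguments are again parallel: the paper integrates its $\Gamma$-smoothed identity for $L'/L$ over $\sigma\in[1,7/2]$ and invokes \eqref{SPNT}, whereas you take real parts in the truncated formula with $x=t^{1/2}$; in both cases the key input is $\sum_{p\le x}|a_L(p)|/p=\kappa\log\log x+O(1)$, which indeed requires \eqref{SPNT} and not merely \eqref{prime_prop}.
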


The main results of the present paper are the following two theorems on lower bound for large values of $L$-functions. We will first state both of them, and then comment on their relation to each other and on their proofs.

\begin{theorem}\label{th:1}
Assume that $L=\sum_{n\geq 1}a_L(n)n^{-s} \in\Sclass'$ satisfies the following Selberg's normality conjecture,
\begin{equation}\label{eq:SNCthm}
\sum_{p\leq x}|a_L(p)|^2 = (\kappa+o(1))\frac{x}{\log x}\qquad(\kappa>0).
\end{equation}
Then for every fixed $\sigma\in [1/2,1)$ and sufficiently large $T$ we have
\[
\max_{t\in[T,2T]} |L(\sigma+it)|\geq \exp\left( \left(C_L(\sigma)+o(1)\right)\frac{(\log T)^{1-\sigma}}{(\log\log T)^{\theta(\sigma)}}\right),
\]
where $\theta(1/2)=1/2$ and $\theta(\sigma)=1$ otherwise, and where
\[
C_L(\sigma) = 
\begin{cases}
\kappa^\sigma m^{1-2\sigma}\frac{(3-2\sigma)^{3/2-\sigma}}{2 (2\sigma-1)^{1/2}}&\text{ if $\frac{1}{2}<\sigma<1$};\\
\sqrt{\kappa}&\text{ if $\sigma=\frac{1}{2}$}.
\end{cases}
\]
\end{theorem}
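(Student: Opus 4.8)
The plan is to use the resonance method of Soundararajan, in the extended form developed by the first author in \cite{A}, which allows one to recover Montgomery-type bounds off the critical line. The basic strategy is to pick a finite set of integers $\mathcal{N}$ (the ``resonator support'') together with nonnegative weights $r(n)$, form the resonator $R(t) = \sum_{n \in \mathcal{N}} r(n) n^{-it}$, and then bound the ratio
\[
\frac{\int_T^{2T} |L(\sigma+it)|\, |R(t)|^2 \, dt}{\int_T^{2T} |R(t)|^2 \, dt}
\]
from below; since this ratio is at most $\max_{t \in [T,2T]} |L(\sigma+it)|$, any lower bound for it gives the theorem. Expanding $|R(t)|^2 = \sum_{m,n} r(m) r(n) (m/n)^{-it}$ and using the Dirichlet series $L(\sigma+it) = \sum_k a_L(k) k^{-\sigma-it}$ (after a standard approximation replacing $L$ by a suitable Dirichlet polynomial, or via an integral representation valid in the region in question — here one uses \eqref{fromPhL} to control the tail and the growth on the relevant vertical lines), the numerator becomes, up to negligible error, a sum over triples $(k,m,n)$ with $km = n$, i.e. $\sum_{n} \sum_{k \mid n} a_L(k) k^{-\sigma} r(n/k) r(n)$. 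The resonator is then designed so that this multiplicative sum is large relative to $\sum_n r(n)^2$.

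The key step is the construction of the resonator. Following \cite{A}, one takes $\mathcal{N}$ to be built from primes $p$ in a dyadic-type range, roughly $p \in [\,(\log T)^{?}, \exp((\log T)^{?})\,]$ chosen so that the supported integers stay below $T^{o(1)}$ (so that the off-diagonal terms $(m/n)^{-it}$ oscillate and contribute an error term after integrating over $[T,2T]$, cf. the length restrictions in \cite{A}), and sets $r(n)$ multiplicative with $r(p)$ tuned to a value of the form $r(p) \approx |a_L(p)| \, p^{-\sigma} / (\text{normalizing factor involving } \log p)$. With this choice the ratio of the multiplicative sums factors as an Euler product $\prod_p \left(1 + \text{main term} \right) / \prod_p \left( 1 + r(p)^2 \right)$, and taking logarithms turns the problem into estimating $\sum_p$ of an expression involving $|a_L(p)|^2 p^{-\sigma}$ (here is precisely where \eqref{eq:SNCthm} enters, via partial summation it converts $\sum_p |a_L(p)|^2 p^{-\sigma}$ over the chosen range into the quantity $\kappa^{\sigma} m^{1-2\sigma} \cdot (\cdots)$ producing $C_L(\sigma)$; note $|a_L(p)| < m$ is used to pass between $|a_L(p)|$ and $|a_L(p)|^2$ where needed). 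Optimizing the endpoints of the prime range and the shape of $r(p)$ — a calculus-of-variations computation — yields the exponent $(\log T)^{1-\sigma}/(\log\log T)^{\theta(\sigma)}$ with the stated constant, the case $\sigma = 1/2$ being special because the optimal resonator is then much longer and the secondary logarithmic factor changes from power $1$ to power $1/2$.

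I expect the main obstacle to be the interface between the resonator length and the error terms: because $L$ has degree $d_L$ and only a polynomial bound \eqref{fromPhL} is available (rather than Lindel\"of), the off-diagonal contribution $\int_T^{2T} |L(\sigma+it)| (m/n)^{-it} dt$ for $m \neq n$ must be shown to be genuinely smaller than the diagonal, which forces the supported integers in $\mathcal{N}$ to be quite short (of size $T^{o(1)}$, with the exponent depending on $d_L$ and $\sigma$) and this in turn caps how large the lower bound can be — this is the reason the result ``falls short'' of the zeta case and is ``difficult to be improved,'' as noted in the abstract. A secondary technical point is justifying the replacement of $L(\sigma+it)$ by a Dirichlet polynomial on $[T,2T]$ valid for all $L \in \Sclass'$ uniformly, which requires a mean-value / convexity argument using axioms (ii), (iii) and \eqref{fromPhL}; and one must check that the error in \eqref{eq:SNCthm} (only an $o(1)$, not a power-saving remainder) is still good enough after partial summation over the relevant range of primes, which it is precisely because that range is a fixed power of $\log T$ up to $\exp((\log T)^{c})$ and the $o(1)$ gets absorbed into the $(C_L(\sigma)+o(1))$ in the exponent.
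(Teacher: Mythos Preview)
Your outline has the right overall architecture (resonance method, approximate $L$ by a Dirichlet polynomial, diagonal versus off-diagonal), but it misses the one idea that makes the argument go through for \emph{complex} coefficients, and it misidentifies which version of the resonance method is being used.

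\textbf{The phase problem.} You take the resonator weights $r(n)$ to be nonnegative, with $r(p)\approx |a_L(p)|\,p^{-\sigma}/(\cdots)$. With such weights the diagonal sum you write down is
\[
\sum_{n}\sum_{k\mid n}\frac{a_L(k)}{k^{\sigma}}\,r(n/k)\,r(n),
\]
and since $a_L(k)$ is genuinely complex there is no reason this is large, or even has large modulus: the terms can cancel. (Your remark that ``$|a_L(p)|<m$ is used to pass between $|a_L(p)|$ and $|a_L(p)|^2$'' does not address this; the issue is the \emph{argument} of $a_L(k)$, not its size.) The paper's device is to take $r(n)=a_L(n)f(n)$ with $f$ a nonnegative multiplicative function supported on squarefree integers. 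Then for $n=mk$ squarefree one has $\overline{r(n)}=\overline{a_L(m)}\,\overline{a_L(k)}\,f(m)f(k)$, and the diagonal term becomes
\[
a_L(k)\,r(m)\,\overline{r(n)}=|a_L(k)|^2\,|a_L(m)|^2\,f(m)^2 f(k)\ \ge 0,
\]
so the phases disappear and one is left with the purely nonnegative sum in which $|a_L(p)|^2$ appears naturally and \eqref{eq:SNCthm} can be invoked. Without this phase-alignment trick the method stalls; this is the missing step in your proposal.

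\textbf{Which resonator.} You invoke the ``extended form developed in \cite{A}'' with an extremely long resonator and supported integers of size $T^{o(1)}$. Neither is what happens here. The paper uses the original Soundararajan setup with resonator length $N\le T^{1-\varepsilon}$; the primes entering $f$ lie in a short window around $\log N$ (roughly $[cL,M]$ with $L\asymp\log N$ and $M=L\cdot(\text{something slowly growing})$), not up to $\exp((\log T)^{c})$. The long-resonator technique of \cite{A} and \cite{BS} relies on a positivity property that fails once the Dirichlet coefficients are not positive reals; the paper explicitly notes that this is why the method cannot be transferred and why the bound stops at $(\log\log T)^{-1}$ rather than $(\log\log T)^{-\sigma}$ for $\sigma>1/2$. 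So the limitation you flag at the end is real, but its source is not the convexity bound \eqref{fromPhL} forcing $N=T^{o(1)}$; it is the inability to go beyond $N=T^{1-\varepsilon}$ without the positivity that $\zeta$ enjoys.

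A minor point: in your displayed ratio you integrate $|L(\sigma+it)|$, but then expand $L$ as a Dirichlet series; one actually integrates $L(\sigma+it)|R(t)|^2$ (no absolute value) and bounds $\max|L|$ by $|M_2|/M_1$.
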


In the statement of the following theorem, we write $N_0(\sigma,T)$ for the number of non-trivial zeros $\rho = \beta + i \gamma$ which have real part $\beta > \sigma$ and imaginary part $\gamma \in
(0,T]$.

\begin{theorem}\label{th:2}
Let $L(s)$ be an element of the Selberg class, which for $\Re s > 1$ is defined
by the Dirichlet series $\sum_{n \geq 1} a_L(n) n^{-s}$. Let real numbers
$\theta$ and $\sigma \in [1/2,1)$ be given. Assume that there exists a number
$\eta>0$ such that
\begin{equation} \label{zerod}
N_0 (\sigma,T)  \ll T^{1-\eta},
\end{equation}
and that
\begin{equation} \label{prime}
\sum_{p \leq x} |a_L(p)| = (\kappa+o(1)) \frac{x}{\log x} \qquad(\kappa>0).
\end{equation}
Then for every sufficiently large $T$ we have
$$
\max_{t \in [T,2T]} \Re e^{-i \theta} \log L(\sigma+it) \geq c_{\kappa,\eta}
\frac{(\log T)^{1-\sigma}}{\log \log T},
$$
where
$$
c_{\kappa,\eta} = \frac{(1-e^{-1})\kappa}{4} \left(\frac{\eta}{4 \sqrt{e}} \right)^{1-\sigma}.
$$ 
\end{theorem}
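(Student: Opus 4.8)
\smallskip
\noindent\emph{Sketch of the strategy.} The plan is to follow Montgomery's Diophantine-approximation method. For $\Re s>1$ we have $\log L(s)=\sum_p\sum_{k\ge1}b(p^k)p^{-ks}$, and by axiom (iv) the prime-power terms with $k\ge2$ form a series that is $O(1)$ in the range of $\sigma$ considered, so the prime terms carry the weight. The first step is to transport this to the line $\Re s=\sigma$: starting from the representation of $\log L$ in terms of its zeros together with the polynomial bound \eqref{fromPhL}, and using the hypothesis \eqref{zerod} to control the contribution of the zeros with $\Re\rho>\sigma$, one shows that there is an exceptional set $\mathcal E\subset[T,2T]$ such that for every $t\in[T,2T]\setminus\mathcal E$,
\[
\Re e^{-i\theta}\log L(\sigma+it)=\Re\!\left(e^{-i\theta}\sum_{p\le y}\frac{a_L(p)}{p^{\sigma+it}}\right)+o\!\left(\frac{(\log T)^{1-\sigma}}{\log\log T}\right),
\]
where $y$ may be taken of size $\asymp\tfrac{\eta}{4\sqrt e}\log T$; both the admissible length $y$ and the measure of $\mathcal E$ are governed by $\eta$ through the bound $N_0(\sigma,T)\ll T^{1-\eta}$. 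For $\sigma>1/2$ this step is fairly routine since the relevant prime sums converge comfortably, whereas $\sigma=1/2$ needs a little extra care because there the sums are only barely convergent.

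The decisive second step is the Diophantine one. Writing $a_L(p)=|a_L(p)|e^{i\phi_p}$ (with $\phi_p=0$ whenever $a_L(p)=0$), we seek $t\in[T,2T]$ outside $\mathcal E$ for which
\[
\left\|\frac{t\log p-\phi_p+\theta}{2\pi}\right\|<\delta\qquad\text{for every prime }p\le y,
\]
with $\delta$ a small fixed tolerance; for such $t$ each prime contributes at least $|a_L(p)|p^{-\sigma}\cos(2\pi\delta)$ to $\Re e^{-i\theta}\log L(\sigma+it)$. The existence of such $t$ — in fact of a set of them large enough to survive the deletion of $\mathcal E$ — follows from an effective simultaneous (inhomogeneous) Diophantine approximation theorem applied to the numbers $\tfrac{\log p}{2\pi}$ (the result of Chen alluded to in the introduction enters here); quantitatively it rests on the elementary lower bound $\bigl|\sum_{p\le y}n_p\log p\bigr|\gg\prod_{p\le y}p^{-\max_p|n_p|}=e^{-O(y)}$ for nonzero integer vectors $(n_p)$ with bounded entries, which is precisely what forces $y=O(\eta\log T)$ and hence produces the factor $(\eta/(4\sqrt e))^{1-\sigma}$. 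The inhomogeneity — the presence of the phases $\phi_p$, i.e.\ the non-reality of the coefficients $a_L(p)$ — is weaker than the homogeneous approximation available for $\zeta(s)$, and it is responsible for the exponent $\log\log T$ in the denominator in place of Montgomery's $(\log\log T)^{\sigma}$.

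Finally, for any $t_0$ in the intersection of the admissible set with $[T,2T]\setminus\mathcal E$,
\[
\Re e^{-i\theta}\log L(\sigma+it_0)\ \ge\ \cos(2\pi\delta)\sum_{p\le y}\frac{|a_L(p)|}{p^{\sigma}}\ +\ o\!\left(\frac{(\log T)^{1-\sigma}}{\log\log T}\right),
\]
while by \eqref{prime} and partial summation $\sum_{p\le y}|a_L(p)|p^{-\sigma}=(\kappa+o(1))\int_2^{y}u^{-\sigma}(\log u)^{-1}\,du\sim\kappa\,y^{1-\sigma}/((1-\sigma)\log y)$; with a suitably weighted range of primes one removes the factor $(1-\sigma)^{-1}$ and picks up instead the elementary constant $\tfrac14(1-e^{-1})$ after optimising $\delta$, and inserting $y\asymp\tfrac{\eta}{4\sqrt e}\log T$, so that $\log y\sim\log\log T$, yields the asserted lower bound with constant $c_{\kappa,\eta}$.

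I expect the main obstacle to be the Diophantine step, and within it the bookkeeping required to guarantee that the set of well-approximating $t$ genuinely survives the removal of the exceptional set coming from \eqref{zerod} while $y$ is still kept of size $\asymp\eta\log T$; the inhomogeneity of this step is also the reason the bound falls short of \eqref{mont}. A secondary technical point is making the Dirichlet-polynomial approximation of the first step rigorous uniformly for $t$ outside a set of small measure, in particular controlling the influence of zeros whose real part is close to, but does not exceed, $\sigma$.
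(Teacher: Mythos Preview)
Your overall architecture---approximate $\log L$ on the line $\Re s=\sigma$ by a short prime sum, use the zero-density hypothesis to stay away from zeros, and then choose $t$ via quantitative inhomogeneous Diophantine approximation---matches the paper's. But the Diophantine step as you describe it is the wrong one, and as written would \emph{not} reach the stated exponent.

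You ask for a $t$ with $\bigl\|\tfrac{t\log p-\phi_p+\theta}{2\pi}\bigr\|<\delta$ for \emph{every} prime $p\le y$, i.e.\ an $\ell^\infty$ simultaneous approximation, and then invoke Chen. But Chen's theorem is not an $\ell^\infty$ statement: it bounds the weighted $\ell^2$ quantity $\inf_{t\in[T_1,T_2)}\sum_j \delta_j\|\lambda_j t-\beta_j\|^2$ by $\tfrac{\Delta}{4}\sin^2\!\bigl(\tfrac{\pi}{2(M+1)}\bigr)+\tfrac{\Delta M^n}{4\pi\Lambda(T_2-T_1)}$. The paper exploits this directly via $\cos y\ge 1-2\pi^2\|y/2\pi\|^2$, so that a bound on the \emph{average} square error already forces $\sum_j\delta_j\cos(\cdot)\ge 0.52\,\Delta$ (with the choice $M=4$). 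No prime-by-prime control is needed or obtained. The effective inhomogeneous $\ell^\infty$ theorems that actually yield your condition---Weber's is the one used in \cite{PS}---have worse parameters, and that is exactly why \cite{PS} only gets $(\log\log T)^{2-\sigma}$ in the denominator. So your attribution ``the inhomogeneity is responsible for the exponent $\log\log T$'' has the causality backwards: inhomogeneity together with the $\ell^\infty$ approach costs you; switching to Chen's $\ell^2$ bound is the new idea that recovers the exponent~$1$.

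Two smaller points where your sketch diverges from what actually works. First, the approximation lemma the paper uses (from \cite{PS}) does not give $\sum_{p\le y}a_L(p)p^{-s}$ but a \emph{weighted} sum over $p\in[x/e,ex]$ with weight $1-|\log(p/x)|$, valid at some nearby $t_1$ whenever $L$ is zero-free for $\Re s'>\sigma$ in a window of height $\asymp(\log t_0)^2$ around $t_0$; the constant $\tfrac14(1-e^{-1})$ falls out of this specific weight and the restriction to $|\log(p/x)|\le\tfrac12$, not from ``optimising $\delta$''. Second, the zero-density estimate is not used to delete an exceptional set of small measure, but to partition $[T,2T]$ into blocks of length $T^\mu$ with $\mu<\eta$ and pick three consecutive zero-free blocks; Chen's lemma is then applied on the middle block, and the constraint $M^n\Lambda^{-1}\ll T^\mu$ (with $\Lambda\gg e^{-4(1+\varepsilon)ex}$ from the linear-form bound you mention) is what forces $x<\tfrac{\eta}{4e}\log T$ and produces the factor $(\eta/4\sqrt{e})^{1-\sigma}$.
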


Theorem \ref{th:1} is proved using the resonance method in the spirit of \cite{So}, while Theorem \ref{th:2} is proved using a variant of Montgomery's method using Diophantine approximation as in \cite{PS}. The necessary assumptions reflect the strong and weak points of each of these respective methods. For the application of the resonance method we need a strong upper bound on the size of the coefficients, but no information on the zeros of the $L$-function. Furthermore, the resonance method gives a better result for $\sigma=1/2$, which is quite natural given the observations in \cite{H,So}. On the other hand, for the application of the method using Dirichlet approximation no bound on the size of the coefficients is necessary, but (since one changes from $L$ to $\log L$) some knowledge on the zeros of the $L$-function is required. It should be noted that proving the kind of zero-density estimate which is assumed in Theorem \ref{th:2} is usually extremely difficult for a generic $L$-function unless $\
\sigma$ is very close to 1. This problem is closely related to the problem of giving upper bounds for the order of magnitude of $L$-functions, which as we have seen in \eqref{fromPhL} depends on $d_L$ and is known to be small (even in the mean-square sense) only when $\sigma$ is close to $1$.\\

It is quite remarkable that both methods mentioned above come to their limit at $\exp\left(c \frac{(\log T)^{1-\sigma}}{(\log\log T)}\right)$ for $\sigma \in (1/2,1)$, and that in both cases it seems to be a very difficult problem to get beyond this barrier. We conjecture that for $\sigma \in (1/2,1)$ under assumptions 
such as those in the statements of Theorem \ref{th:1} and \ref{th:2} one should actually be able to achieve roughly the same lower bounds as in the case of the Riemann zeta function \eqref{mont}, that is, with the power of the $\log \log$ term in the denominator of the exponential function reduced from 1 to $\sigma$. However, as noted, this seems to be very difficult. In both proofs it is clearly visible why it is not possible to go significantly beyond the result obtained in Theorems \ref{th:1} and \ref{th:2}. In the case of the resonance method the restriction essentially comes from the requirement of keeping the length of the resonator well below $T$, and the problem corresponds to successfully implementing the ``extremely long resonator'' in this setting, which is prohibited by the fact that the coefficients are not necessarily positive real numbers and thus a certain ``positivity'' property, which plays a crucial role in \cite{A,BS}, fails to hold. In the case of the method using Diophantine approximation, the problem corresponds to 
bounding away linear forms of logarithms of primes from the origin; instead of the general Diophantine approximation results used in the current proof of Theorem \ref{th:2} one would need to use a Diophantine approximation tool which is tailor-made for dealing with logarithms of primes, and take into account the Diophantine properties of these logarithms of primes.\\

The precise relation between the resonance method and the Diophantine approximation method seems to be not really understood yet, which is the reason for including both proofs in the present paper, even if they come to rather similar conclusions. It is interesting to compare the restrictions which prevent further improvements of the proofs of Theorems \ref{th:1} and \ref{th:2} given below. In the case of the proof of Theorem \ref{th:1}, the restriction comes from the requirement of keeping the length of the resonator well below $T$, to make sure that the contribution of the off-diagonal terms is negligible. These off-diagonal terms contain quotients $(m/n)^{it}$, where $m$ and $n$ have non-zero coefficients in the representation of the resonator as a Dirichlet polynomial, and it is important that these quotients are bounded away from 1 to make sure that $\int_{T}^{2T} (m/n)^{it}~dt$ is small. In the case of Theorem \ref{th:2}, the restriction comes from the quantity $\Lambda$ which appears in Lemma \ref{lemmachen}
 below, and which is defined as the minimal value of a linear form in logarithms of primes. Thus in a sense the restriction is of a similar nature in both proofs, just that it appears once in multiplicative form (as the quotient of terms in the resonator which has to be bounded away from 1) and once, after taking logarithms, in additive form (as a linear form of logarithms of primes which has to be bounded away from 0). What is furthermore striking is the fact that the lemma from inhomogeneous Diophantine approximation, which plays the crucial role in our proof of Theorem \ref{th:2}, is itself proved (see \cite{chen}) using a probabilistic method which has some resemblance of a ``resonance'' argument. A further remark concerns the paper of Balasubramanian and Ramachandra \cite{BR}, whose method at first glance looks very different from the other two methods mentioned here so far. However, on second thought one is tempted to read their method as a kind of resonance argument where a high moment of $\zeta$ 
plays the role of the resonator. This would fit together with the way how the resonators are constructed in \cite{So} and in subsequent papers, namely as multiplicative functions which are supported on numbers having many small prime factors. The bottom line is that the connections between all 
these 
methods are not well understood, and that there would be some merit in clarifying these connections.\\

The results obtained in Theorems \ref{th:1} and \ref{th:2} should be compared to the earlier result in \eqref{pan-st}. Note that in both cases the exponent of the $\log \log$ term inside the exponential function is reduced from $2-\sigma$ to 1 for $\sigma \in (1/2,1)$, and that in Theorem …\ref{th:1} it is reduced from 1 to 1/2 for $\sigma=1/2$. Note again that for Theorem \ref{th:1} no analogue of the Riemann Hypothesis has to be assumed; for Theorem \ref{th:2} the assumption of an analogue of the Riemann hypothesis in \cite{PS} is reduced to the assumption of a zero density estimate. Furthermore, for both theorems the conditions~\eqref{SPNT} and~\eqref{SNC} have been relaxed to the assumption of the Selberg normality conjecture and a prime number theorem for the coefficients $a_L(n)$, respectively, without an upper bound on the order of the error term. The proof of Theorem \ref{th:1} is inspired by the proofs given in \cite{So}. The proof of Theorem \ref{th:2} follows the one in \cite{PS}, but instead of 
using an $\ell^\infty$ (maximal error) result from inhomogeneous Diophantine approximation it uses an $\ell^2$ (average error) result of Chen \cite{chen}. 

Chen's theorem can also be used to improve the lower bound for extreme values on the line $\sigma=1$. The Riemann zeta-function case is well-investigated and the best known result is due to Granville and Soundararajan \cite{GS}, who showed that
\[
\max_{T\leq t\leq 2T}|\zeta(1+it)| \geq e^\gamma(\log\log T+\log\log\log T - \log\log\log\log T + \mathcal{O}(1)),
\]
which is the expected order of magnitude under the Riemann Hypothesis; here $\gamma$ denotes the Euler constant. As we showed in Proposition \ref{prop:1} above a similar order of magnitude is expected for $L$-functions, whereas the best known result concerning Dirichlet $L$-functions $L(s,\chi)$ is due to Steuding \cite{S2}. Using an effective version of Kronecker's theorem, he proved the existence of infinitely many $s=\sigma+it$ with $\sigma\to 1^+$ and $t\to+\infty$ such that $|L(\sigma+it,\chi)|$ is of size at least $\log\log\log t/\log\log\log\log t$, which, by the Phragm\'en--Lindel\"of principle, leads to 
\[
|L(1+it,\chi)| = \Omega\left(\frac{\log\log\log t}{\log\log\log\log t}\right).
\]

An application of Chen's lemma allows to prove the following refinement of Steuding's result, which for $\sigma=1$ gives the expected order of magnitude.

\begin{theorem} \label{th:3}
Let $L(s)\in \Sclass$ be defined by the Dirichlet series $\sum_{n\geq 1}a_L(n) n^{-s}$ for $\Re s>1$. Assume that \eqref{SPNT} holds and that $\theta$ is an arbitrary given real number. Then for every sufficiently large $T$, there is $\sigma>1$ and $t\in [T,2T]$ such that
\begin{equation}\label{eq:Steuding}
\Re e^{-i\theta} \log L(\sigma+it) \geq \kappa\log \log \log T +\mathcal{O}(1).
\end{equation}
In particular, 
\[
|L(1+it)| = \Omega((\log\log t)^\kappa).
\]
\end{theorem}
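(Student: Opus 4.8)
The plan is to write $\log L(\sigma+it)$, for $\sigma$ a little larger than $1$, as a short Dirichlet polynomial over primes together with a bounded error, and then to invoke Chen's lemma (Lemma~\ref{lemmachen}) to choose $t\in[T,2T]$ at which almost all the prime terms are aligned in the direction $e^{i\theta}$.

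Concretely, from axiom (iv) and the estimate $b(p^j)\ll p^{j\delta}$ with $\delta<1/2$ one gets, for every $\sigma\ge 1$,
\[
\log L(\sigma+it)=\sum_{p}\frac{a_L(p)}{p^{\sigma+it}}+O(1),
\]
since the contribution of the prime powers $p^j$ with $j\ge 2$ is dominated by $\sum_p p^{2\delta-2\sigma}=O(1)$. Writing $a_L(p)=|a_L(p)|e^{i\psi_p}$ this gives
\[
\Re e^{-i\theta}\log L(\sigma+it)=\sum_{p}\frac{|a_L(p)|\cos(\psi_p-\theta-t\log p)}{p^{\sigma}}+O(1).
\]
Next I would fix a parameter $y=(\log T)^{c}$ with a small fixed $c>0$ and set $\sigma=1+1/\log y$ (so in particular $\sigma>1$). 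From \eqref{SPNT} and partial summation one checks the two estimates
\[
\sum_{p\le y}\frac{|a_L(p)|}{p^{\sigma}}=\kappa\log\log y+O(1)=\kappa\log\log\log T+O(1),\qquad \sum_{p>y}\frac{|a_L(p)|}{p^{\sigma}}=O(1);
\]
here the choice $\sigma-1\asymp 1/\log y$ is precisely what keeps the tail over $p>y$ convergent and $O(1)$. Hence it suffices to find $t\in[T,2T]$ for which the ``deficit'' $\sum_{p\le y}\tfrac{|a_L(p)|}{p^{\sigma}}\bigl(1-\cos(\psi_p-\theta-t\log p)\bigr)$ is $O(1)$, for then $\Re e^{-i\theta}\log L(\sigma+it)\ge \kappa\log\log\log T+O(1)$, which is \eqref{eq:Steuding}.

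To produce such a $t$ I would apply Chen's lemma, exactly as in the proof of Theorem~\ref{th:2}, to the real numbers $\alpha_p:=\tfrac{\log p}{2\pi}$ ($p\le y$) with the inhomogeneous shifts $\beta_p:=\tfrac{\psi_p-\theta}{2\pi}$: it yields $t\in[T,2T]$ at which $\sum_{p\le y}\|t\alpha_p-\beta_p\|^2$ is as small as we wish, provided $T$ is large in comparison with a quantity built from $\pi(y)$, from the prescribed smallness, and from the reciprocal of the least value $\Lambda$ of a linear form $\sum_{p\le y}h_p\log p$ taken modulo $2\pi$ over bounded nonzero integer vectors $(h_p)$. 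Using $1-\cos(2\pi x)\ll\|x\|^2$ together with $|a_L(p)|\ll p^{\varepsilon}$, the deficit is $\ll(\log T)^{\varepsilon}\sum_{p\le y}\|t\alpha_p-\beta_p\|^2$, so it is enough to drive the $\ell^2$-error slightly below $(\log T)^{-\varepsilon}$. The one delicate point is to verify that the window $[T,2T]$ is wide enough: one bounds $\Lambda^{-1}$ by $\exp(y^{O(1)})$ --- when the relevant linear form rounds to a nonzero multiple of $2\pi$ this step invokes Baker's theorem on linear forms in logarithms, and otherwise the elementary inequality $|\log(a/b)|\ge 1/\max(a,b)$ for distinct positive integers $a,b$ suffices --- so that the requirement on $T$ becomes a fixed power of $\log T$ once $c$ is chosen small enough. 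This calibration is exactly what forces the saving to be $\kappa\log\log y\sim\kappa\log\log\log T$ rather than anything larger, and it is the part of the argument where I expect the real work to lie.

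Combining the two steps gives $\sigma>1$ and $t\in[T,2T]$ with $\Re e^{-i\theta}\log L(\sigma+it)\ge \kappa\log\log\log T+O(1)$, that is, \eqref{eq:Steuding}. Taking $\theta=0$ and letting $T\to\infty$ along a suitable sequence produces points $\sigma_n\to 1^+$, $t_n\to\infty$ with $|L(\sigma_n+it_n)|\gg(\log\log t_n)^{\kappa}$; the claimed $\Omega$-bound for $|L(1+it)|$ on the line $\sigma=1$ then follows from these off-line large values by the Phragm\'en--Lindel\"of principle, just as in Steuding's deduction recalled above.
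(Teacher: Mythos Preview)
Your overall strategy coincides with the paper's: truncate $\log L$ to a prime sum, pick $\sigma$ just above $1$ so that the tail is $O(1)$ and the head is $\kappa\log\log\log T+O(1)$, use Chen's lemma to align the cosines, and finish with Phragm\'en--Lindel\"of. Two points are worth flagging.

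First, the paper applies Chen's lemma in its \emph{weighted} form, taking $\delta_j=|a_L(p_j)|/p_j^{\sigma}$. The deficit is then exactly $2\pi^2$ times the weighted $\ell^2$-error, and since $\Delta=\sum_j\delta_j\sim\kappa\log\log\log T$, the choice $M^2=\log\log\log T$ already makes $(\Delta/4)\sin^2(\pi/(2(M+1)))=O(1)$, with $x=(\log T)/(2M)$ keeping $M^n\Lambda^{-1}/T$ small. Your unweighted route (bounding $|a_L(p)|/p^{\sigma}$ crudely and then driving the unweighted $\ell^2$-sum below $(\log T)^{-\varepsilon}$) also works once $c<2/3$, but it forces a much larger $M$ and is less transparent; using the weights built into Lemma~\ref{lemmachen} is the cleaner execution.

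Second, your appeal to Baker's theorem is based on a misreading of $\Lambda$. In Lemma~\ref{lemmachen}, $\Lambda$ is the minimum of $|u_1\lambda_1+\dots+u_n\lambda_n|$ over nonzero integer vectors with $|u_j|\le M$; there is no reduction modulo $1$ or modulo $2\pi$. With $\lambda_j=(\log p_j)/(2\pi)$ one has $\sum_j u_j\lambda_j=(2\pi)^{-1}\log(A/B)$ for positive integers $A,B\le(\prod_{p\le y}p)^M$, and the elementary inequality $|\log(A/B)|\ge 1/\max(A,B)$ gives $\Lambda^{-1}\ll\exp(M\vartheta(y))\ll\exp(My)$ directly. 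No case distinction, and no Baker, is needed; the paper's proof of Theorem~\ref{th:2} handles $\Lambda$ in exactly this elementary way.
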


The remaining part of this paper is organized as follows. In Section \ref{sec_th1} we give the proof of Theorem \ref{th:1} using the resonance method. In Section \ref{sec_th2} we prove Theorem \ref{th:2} using methods from inhomogeneous Diophantine approximation. In Section \ref{sec_prop1} we comment on possible further improvements of the method from Section \ref{sec_th1}, and we prove the upper bound given in Proposition \ref{prop:1}. Finally, in Section \ref{sec_th3th4}, we give the proof of Theorems \ref{th:3}.\\

\section{Approximation of $L$-function by a Dirichlet polynomial and the resonance method.} \label{sec_th1}

Throughout this section, we assume that the assumptions of Theorem \ref{th:1} are satisfied. In particular we assume that $L(s)=\sum_{n\leq 1}a_L(n)n^{-s}$ denotes a fixed element of the Selberg class with polynomial Euler product, and that $\sigma$ is a fixed real number from the interval $[1/2,1)$.\\

First we will show that a given $L$-function can be approximated by a corresponding Dirichlet polynomial with an extra smoothing factor $e^{-(n/X)}$, which will be negligible for small $n$. In order to do that, let us put $X=T^{d_L+\varepsilon}$ for some small positive $\varepsilon$. It is well known that the Mellin inversion formula for the gamma function gives
\[
e^{-(n/X)} = \frac{1}{2\pi i}\int_{2-i\infty}^{2+i\infty}\Gamma(w)n^{-w}X^w dw
\qquad (X>0).
\]
Therefore, 
\begin{equation}\label{eq:fromMellin}
\sum_{n=1}^\infty \frac{a_L(n)}{n^s}e^{-(n/X)} = \frac{1}{2\pi i}\int_{2-i\infty}^{2+i\infty}L(s+w)\Gamma(w)X^w dw.
\end{equation}
Note that, by Stirling's formula, the integration over $|\Im(w)|>\frac{T}{2} $ is bounded if $t\in [T,2T]$. Thus it suffices to consider
\begin{equation}\label{eq:contour1}
\frac{1}{2\pi i}\int_{2-\frac{i}{2}T}^{2+\frac{i}{2}T}L(s+w)\Gamma(w)X^w dw.
\end{equation}

Now we move the contour of integration in \eqref{eq:contour1} to the line $\Re w = -\sigma$. Since $\Im w+t>T/2$, we pass only the simple pole of $\Gamma(w)$ at $w=0$ where the residue of the integrand is $L(s)$. Moreover, \eqref{fromPhL} implies that $L(i(t+w))\ll T^{(d_L+\varepsilon)/2}$. Therefore, by the choice of $X$, we have
\[
\frac{1}{2\pi i}\int_{-\sigma-i\frac{T}{2}}^{-\sigma+i\frac{T}{2}}L(s+w)\Gamma(w)X^w dw\ll T^{(d_L+\varepsilon)(1/2-\sigma)}\ll 1,
\]
and, in consequence, for $t\in [T,2T]$, 
\[
L(\sigma+it)=\sum_{n=1}^\infty \frac{a_L(n)}{n^s}e^{-(n/X)} + \mathcal{O}(1).
\]

Let us observe that for $n>X(\log X)$ we have $\exp(-n/X)\leq n^{-2/3}$, so the series $\sum_{n\geq X (\log X)}\frac{a_L(n)}{n^s}e^{-(n/X)}$ is bounded and we obtain
\begin{equation}\label{eq:approxByPoly}
L(\sigma+it)=\sum_{n\leq T^{d_L+2\varepsilon}} \frac{a_L(n)}{n^s}e^{-(n/X)} + \mathcal{O}(1).
\end{equation}

Now we use \eqref{eq:approxByPoly} for the resonance method as introduced by Soundararajan \cite{So}. Following his notation, let $\Phi$ be a smooth function compactly supported on the interval $[1,2]$ and satisfying $0\leq \Phi(t)\leq 1$ and $\Phi(t)=1$ for $t\in (5/4,7/4)$. Then the Fourier transform of $\Phi$ satisfies $\hat{\Phi}(y) = \int_{\mathbb{R}}\Phi(t)e^{-ity}dt\ll |y|^{-v}$ for any fixed positive integer $v$. One can show (see (2) in \cite{So}) that for any Dirichlet polynomial $R(t) = \sum_{n\leq N} r(n)n^{-it}$ we have
\[
M_1=M_1(R,T) := \int_{\mathbb{R}}|R(t)|^2\Phi(t/T)dt =  T\hat{\Phi}(0)\left(1+\mathcal{O}(1/T)\right)\sum_{n\leq N}|r(n)|^2,
\]
provided that $N\leq T^{1-\varepsilon}$.\\

Now let us consider 
\begin{align*}
M_2=M_2(R,T,\sigma)&:=\int_{\mathbb{R}}L(\sigma+it)|R(t)|^2\Phi(t/T)dt.
\end{align*}
Obviously,
\begin{equation}\label{eq:maxLfirst}
\max_{t\in[T,2T]}|L(\sigma+it)|\gg \frac{|M_2(R,T,\sigma)|} {M_1(R,T)}.
\end{equation}

In order to estimate $M_2$ we notice that \eqref{eq:approxByPoly} implies
\[
M_2 = T\sum_{n,m\leq N}\sum_{k\leq T^{d_L+2\varepsilon}}\frac{a_L(k)r(m)\overline{r(n)}}{k^\sigma}e^{-(k/X)}\hat{\Phi}\left(T\log(mk/n)\right)+\mathcal{O}(M_1(R,T)).
\]
and that the contribution from off-diagonal terms $mk\ne n$ is 
\[
\ll\frac{N}{T}\sum_{n\leq N}|r(n)|^2\sum_{k\leq T^{d_L+\varepsilon}}\frac{|a_L(k)|}{k^\sigma}\ll T\sum_{n\leq N}|r(n)|^2 = \mathcal{O}(M_1(R,T)),
\]
since $\hat{\Phi}(T\log(mk/n))\ll T^{-d_L/2-1}$ for $mk\ne n$, $N\leq T^{1-\varepsilon}$, and since $a_L(n)\ll n^{\varepsilon'}$ for any (arbitrarily small, fixed) $\varepsilon' >0$. Hence
\begin{equation}\label{eq:I_1first}
M_2 = T\hat{\Phi}(0)\sum_{mk=n\leq N}\frac{a_L(k)r(m)\overline{r(n)}}{k^\sigma}e^{-(k/X)}+\mathcal{O}(M_1(R,T)),
\end{equation}
and it remains to prove that there is a resonator $R(t)$ such that
\[
\left|\sum_{mk=n\leq N}\frac{a_L(k)r(m)\overline{r(n)}}{k^\sigma}e^{-(k/X)}\right|\Bigg/\sum_{n\leq N}|r(n)|^2\geq
\exp\left(C_L(\sigma)\frac{(\log N)^{1-\sigma}}{\log\log N}\right).
\]

Let us put $r(n)=a_L(n)f(n)$, where $f$ is a non-negative real multiplicative function supported only on the square-free numbers. Then, for $mk=n\leq N$, 
\[
\left|\sum_{mk=n\leq N}\frac{a_L(k)r(m)\overline{r(n)}}{k^\sigma}e^{-(k/X)}\right| \geq \frac{1}{2}\sum_{mk=n\leq N}\frac{a_L(k)r(m)\overline{r(n)}}{k^\sigma},
\]
since $a_L(k)r(m)\overline{r(n)}=|a_L(k)|^2 |a_L(m)|^2 f(m)^2f(k)\in\mathbb{R}_{\geq 0}$, and $\exp(-k/X)\geq 1/2$ for $k<T^{1-\varepsilon}$ and sufficiently large $T$. Thus Theorem \ref{th:1} follows easily from the following lemma, whose proof follows the proof of \cite[Theorem 2.1]{So} with the application of \eqref{eq:SNCthm} instead of the classical prime number theorem. 

\begin{lemma}
For every $\sigma\in [1/2,1)$ and every sufficiently large $N$ there is a real multiplicative function $f(n)$ supported on the square-free numbers such that 
\begin{multline}
\sum_{mk\leq N}\frac{|a_L(k)|^2|a_L(m)|^2f(k)f(m)^2}{k^\sigma}\Bigg/\sum_{n\leq N}|a_L(n)|^2f(n)^2\\
\geq
\exp\left(C_L(\sigma)\frac{(\log N)^{1-\sigma}}{(\log\log N)^{\theta(\sigma)}}\right), \nonumber
\end{multline}
where $\theta(\sigma)$ and $C_L(\sigma)$ are the same as in Theorem \ref{th:1}.
\end{lemma}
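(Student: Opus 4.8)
The plan is to run Soundararajan's resonator optimization (\cite[Theorem 2.1]{So}) while replacing every appeal to the prime number theorem by the Selberg normality conjecture \eqref{eq:SNCthm}.

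\emph{Choice of $f$.} I would take $f$ multiplicative, supported on squarefree integers whose prime factors lie in a range depending on $N$ and $\sigma$, and set $f(p)=0$ whenever $a_L(p)=0$. For $1/2<\sigma<1$ I would set $f(p)=c$ for all (remaining) primes $p\le y$, and for $\sigma=1/2$ I would set $f(p)=\lambda/(\sqrt{p}\,\log p)$ on a window $p\in(P_1,P_2]$ with $P_1\asymp\log N\log\log N$ and $P_2\asymp P_1^{2}$, as in \cite{So}. The quantities $c,y,\lambda,P_1,P_2$ are free parameters, to be pinned down at the very end by an optimization subject to the constraint that the resonator be essentially of length at most $N$.

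\emph{Reduction to an Euler product.} Since $a_L$, and hence $|a_L|^{2}$, is multiplicative and $f$ is supported on squarefrees, for coprime squarefree $m,k$ one has $a_L(k)\,a_L(m)\,\overline{a_L(mk)}\,f(mk)=|a_L(k)|^{2}|a_L(m)|^{2}f(k)$, while the corresponding term vanishes whenever $(m,k)>1$. Consequently, \emph{were} the constraints $mk\le N$ and $n\le N$ absent, the ratio in the lemma would equal
\[
\prod_{p}\left(1+\frac{|a_L(p)|^{2}f(p)\,p^{-\sigma}}{1+|a_L(p)|^{2}f(p)^{2}}\right),
\]
a finite product since $f$ is supported on primes in a bounded range. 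To justify dropping the constraints I would use Rankin's trick: for a suitable small $\beta>0$ one bounds the tail $\sum_{n>N}|a_L(n)|^{2}f(n)^{2}$ by $N^{-\beta}\prod_{p}\bigl(1+|a_L(p)|^{2}f(p)^{2}p^{\beta}\bigr)$ and checks, using \eqref{eq:SNCthm} and the explicit shape of $f$, that this is $o\bigl(\sum_{n}|a_L(n)|^{2}f(n)^{2}\bigr)$; the tail of the numerator is treated identically. Equivalently, in the random-divisor model this amounts to checking that $\sum_{p}(\log p)\,\frac{|a_L(p)|^{2}f(p)^{2}}{1+|a_L(p)|^{2}f(p)^{2}}$, the expected value of $\log n$ under the weight $|r(n)|^{2}$, stays below $(1-o(1))\log N$ with fluctuations of smaller order. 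This ``keep the resonator short'' requirement is precisely what forces the exponent $1-\sigma$.

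\emph{Evaluation and optimization.} Taking logarithms, using $\log(1+x)=x+O(x^{2})$ on the (small) terms with large $p$, and bounding the denominators by $1+|a_L(p)|^{2}f(p)^{2}\le 1+m^{2}f(p)^{2}$ (legitimate because $L\in\Sclass'$ forces $|a_L(p)|<m$), one gets that the logarithm of the product above is
\[
\geq (1+o(1))\sum_{p}\frac{|a_L(p)|^{2}f(p)\,p^{-\sigma}}{1+m^{2}f(p)^{2}},
\]
and this sum is evaluated by partial summation from \eqref{eq:SNCthm} (using $\sum_{p\le x}|a_L(p)|^{2}\sim\kappa x/\log x$, so that $\sum_{p\le y}|a_L(p)|^{2}p^{-\sigma}\sim\frac{\kappa\,y^{1-\sigma}}{(1-\sigma)\log y}$ and $\sum_{p\le y}|a_L(p)|^{2}\log p\sim\kappa y$). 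When $\sigma=1/2$ one has $|a_L(p)|^{2}f(p)^{2}\ll(\log P_1)^{-2}=o(1)$, the denominators are $1+o(1)$, and choosing $\lambda$ and the window optimally against the length constraint gives the bound with $\theta(1/2)=1/2$ and $C_L(1/2)=\sqrt{\kappa}$. When $1/2<\sigma<1$ the optimal resonator is not short enough for $f(p)$ to be negligible, the factor $1+m^{2}c^{2}$ survives, and one is led to maximise an expression of the shape $\dfrac{c^{2\sigma-1}}{1+m^{2}c^{2}}$ over $c>0$ (the maximiser occurs at $m^{2}c^{2}=\frac{2\sigma-1}{3-2\sigma}$); combined with the choice of $y$ dictated by the length constraint this produces the stated constant $C_L(\sigma)$ and $\theta(\sigma)=1$.

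\emph{Main obstacle.} The Mellin and Rankin bookkeeping is routine. The delicate part is the last step in the range $1/2<\sigma<1$: unlike at $\sigma=1/2$, the factor $1+|a_L(p)|^{2}f(p)^{2}$ cannot be discarded, so the inequality $|a_L(p)|<m$ genuinely enters the extremal computation, and one must carry the optimization over the free parameters while respecting \emph{simultaneously} the Rankin/length constraint above and the off-diagonal constraint from Section~\ref{sec_th1}; it is exactly this double requirement of keeping the resonator short that prevents pushing the power of $\log\log N$ below $1$ in this range.
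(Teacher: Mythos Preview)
Your overall strategy—reduce to an Euler product via Rankin's trick and optimize the resonator as in \cite{So}—matches the paper's, and the $\sigma=1/2$ case is handled correctly. However, for $1/2<\sigma<1$ your choice of a \emph{constant} weight $f(p)=c$ on $p\le y$ does not reproduce the stated constant $C_L(\sigma)$, contrary to your final claim. With this $f$ the length constraint reads $c^{2}\sum_{p\le y}|a_L(p)|^{2}\log p\sim\kappa c^{2}y\le(1+o(1))\log N$, forcing $y\sim(\kappa c^{2})^{-1}\log N$, and the logarithm of the Euler product becomes
\[
\sim\ \frac{c}{1+m^{2}c^{2}}\cdot\frac{\kappa\,y^{1-\sigma}}{(1-\sigma)\log y}
\ =\ \frac{\kappa^{\sigma}}{1-\sigma}\cdot\frac{c^{2\sigma-1}}{1+m^{2}c^{2}}\cdot\frac{(\log N)^{1-\sigma}}{\log\log N}.
\]
Maximizing at $m^{2}c^{2}=\tfrac{2\sigma-1}{3-2\sigma}$ (which you correctly locate) gives the constant
$\frac{\kappa^{\sigma}m^{1-2\sigma}}{2(1-\sigma)}(2\sigma-1)^{\sigma-1/2}(3-2\sigma)^{3/2-\sigma}$,
differing from $C_L(\sigma)$ by the factor $(2\sigma-1)^{\sigma}/(1-\sigma)$. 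This factor is strictly below $1$ for $\sigma$ close to $1/2$ (about $0.28$ at $\sigma=0.51$, about $0.95$ at $\sigma=0.6$), so your resonator does not prove the lemma as stated in that range.

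The paper instead follows Soundararajan and takes a weight that \emph{decays} in $p$, namely $f(p)=(L/p)^{\sigma}$ for $p\in[cL,M]$ with $L\asymp\log N$ and $M/L\to\infty$ slowly. The point is that both the length-constraint sum $\sum_p f(p)^{2}|a_L(p)|^{2}\log p$ and the gain sum $\sum_p f(p)|a_L(p)|^{2}p^{-\sigma}$ then become weighted versions of $\sum_p |a_L(p)|^{2}p^{-2\sigma}$ over the window and are dominated by the lower endpoint $p\approx cL$; this matching is what, after optimizing over $c$, yields exactly $C_L(\sigma)$. With your constant $f$ the two sums decouple (the gain concentrates on small $p$, the constraint on large $p$), and the resulting mismatch is precisely the extraneous factor $(2\sigma-1)^{\sigma}/(1-\sigma)$ above.
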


\begin{proof}
Since $f(n)$ is supported only on the square-free numbers and since both functions $a_L(n)$ and $f(n)$ are multiplicative, we have
\begin{align}
&\left|\sum_{mk=n\leq N}\frac{a_L(k)r(m)\overline{r(n)}}{k^\sigma} \right|\nonumber\\
&\qquad\qquad\qquad\qquad=\sum_{k\leq N}\frac{f(k)|a_L(k)|^2}{k^\sigma} \sum_{\substack{m\leq N/k\\ \gcd(k,m)=1}}f(m)^2|a_L(m)|^2\nonumber\\
&\qquad\qquad\qquad\qquad\geq \sum_{k\leq N}\frac{f(k)|a_L(k)|^2}{k^\sigma}\sum_{\substack{m\leq N/k\\ \gcd(k,m)=1}}f(m)^2|a_L(m)|^2\nonumber\\
&\qquad\qquad\qquad\qquad=\sum_{k\leq N}\frac{f(k)|a_L(k)|^2}{k^\sigma}\prod_{p\nmid k}(1+f(p)^2|a_L(p)|^2)\label{eq:mkMain}\\
&\qquad\qquad\qquad\qquad\quad+\mathcal{O}\left(\sum_{k\leq N}\frac{f(k)|a_L(k)|^2}{k^\sigma}\sum_{\substack{m> N/k\\ \gcd(k,m)=1}}f(m)^2|a_L(m)|^2\right).\nonumber
\end{align}
Applying the so-called Rankin's trick for $\alpha>0$ we get that the error term above is
\begin{align*}
&\ll\frac{1}{N^\alpha}\sum_{k\leq N}\frac{f(k)|a_L(k)|^2}{k^{\sigma-\alpha}}\prod_{p\nmid k}(1+p^\alpha f(p)^2|a_L(p)|^2)\\
&\ll\frac{1}{N^\alpha}\prod_{p}\left(1+\left( f(p)^2+\frac{f(p)}{p^{\sigma}}\right)p^\alpha|a_L(p)|^2\right).
\end{align*}
Moreover the main term in \eqref{eq:mkMain} is 
\begin{multline*}
\prod_{p}\left(1+\left( f(p)^2+\frac{f(p)}{p^{\sigma}}\right)|a_L(p)|^2\right)\\+\mathcal{O}\left(\frac{1}{N^\alpha}\prod_{p}\left(1+\left( f(p)^2+\frac{f(p)p^\alpha}{p^{\sigma}}\right)|a_L(p)|^2\right)\right).
\end{multline*}
Therefore
\begin{align}\label{eq:ratio}
\sum_{mk=n\leq N}\frac{a_L(k)r(m)\overline{r(n)}}{k^\sigma}&=\prod_{p}\left(1+\left( f(p)^2+\frac{f(p)}{p^{\sigma}}\right)|a_L(p)|^2\right)\\
&\quad+\mathcal{O}\left(\frac{1}{N^\alpha}\prod_{p}\left(1+\left( f(p)^2+\frac{f(p)}{p^{\sigma}}\right)p^\alpha|a_L(p)|^2\right)\right).\nonumber
\end{align}

Note that
\[
\sum_{n\leq N}|a_L(n)|^2 f(n)^2\leq \prod_{p}\left(1+f(p)^2|a_L(p)|^2\right),
\]
so our main purpose is to find suitable $f(n)$ such that the ratio between the error term and the main term in \eqref{eq:ratio} is $o(1)$ and
\begin{equation}\label{eq:largeRatio}
\prod_{p}\left(1+\left( f(p)^2+\frac{f(p)}{p^{\sigma}}\right)|a_L(p)|^2\right)\Bigg/ \prod_{p}\left(1+f(p)^2|a_L(p)|^2\right)
\end{equation}
is large.\\

First, let us consider the case $\sigma>1/2$. For any $L$, $M$ depending on $N$ and satisfying $L=o(M)$ we put
\[
f(p)=\begin{cases}
(L/p)^\sigma\ne 0&\text{ if $p\in[cL,M]$},\\
0&\text{ otherwise},
\end{cases}
\] 
where the choice of the positive constant $c$ will be optimized later. From \eqref{eq:SNCthm} we know that there exits $E(x)$ tending to $0$ as $x\to\infty$ such that
\[
\sum_{p\leq x}|a_L(p)|^2 = \kappa \frac{x}{\log x}(1+E(x)).
\]
Then, taking
\[
M = L\left(\min\left(\frac{1}{\max_{x>L}\sqrt{|E(x)|}},\frac{\log L
}{\log\log L}\right)\right)^{1/(2\sigma-1)}=:Lg(L)^{1/(2\sigma-1)},
\]
one can easily get
\begin{align*}
\sum_{p}f(p)^2|a_L(p)|^2\log p &= L^{2\sigma}\sum_{cL\leq p\leq X}\frac{|a_L(p)|^2\log p}{p^{2\sigma}}\\
& =\frac{\kappa c^{1-2\sigma}}{2\sigma-1} L - \frac{\kappa}{2\sigma-1}\frac{L}{g(L)}+o\left(\frac{L}{g(L)}\right).
\end{align*}
Therefore, for $\alpha = (\log L)^{-3}$, the ratio of the error term to the main term in \eqref{eq:ratio} is
\begin{align*}
&\leq N^{-\alpha}\prod_{p}\left(1+\left(f(p)^2+\frac{f(p)}{p^\sigma}\right)(p^{\alpha}-1)|a_L(p)|^2\right)\\
&\leq \exp\Bigg(-\alpha\log N + \alpha\sum_p f(p)^2|a_L(p)|^2\log p+\alpha\sum_p \frac{f(p)\log p}{p^\sigma}|a_L(p)|^2\\
&\quad+\mathcal{O}\left(\alpha^2\sum_p f(p)^2(\log p)^2|a_L(p)|^2\right)\Bigg)\\
&\leq \exp\left( -\alpha\frac{\kappa}{2\sigma-1}\frac{L}{g(L)}+o\left(\alpha\frac{L}{g(L)}\right)\right)=o(1),
\end{align*}
where the last inequality holds if 
\begin{equation}\label{eq:defL}
L = \frac{(2\sigma-1)c^{2\sigma-1}}{\kappa}\log N.
\end{equation}

Now, in order to estimate the ratio in \eqref{eq:largeRatio} it suffices to observe that $f(p)^2|a_L(p)|^2 \leq c^{-2\sigma}m^2$ and $f(p)|a_L(p)|^2/p^\sigma = o(1)$. Then the ratio in \eqref{eq:largeRatio} is
\begin{align*}
&\geq\exp\left(\frac{1+o(1)}{1+c^{-2\sigma}m^2}\sum_p\frac{f(p)|a_L(p)|^2}{p^\sigma}\right)\\
&=\exp\left(\left(\frac{\kappa c}{(2\sigma-1)(c^{2\sigma}+m^2)}+o(1)\right)\frac{L^{1-\sigma}}{\log L}\right)\\
&=\exp\left(\left(\kappa^\sigma(2\sigma-1)^{-\sigma}\frac{c^{2\sigma(3/2-\sigma)}}{c^{2\sigma}+m^2}+o(1)\right)\frac{(\log N)^{1-\sigma}}{\log \log N}\right)\\
&=\exp\left(\left(\kappa^\sigma m^{1-2\sigma}\frac{(3-2\sigma)^{3/2-\sigma}}{2 (2\sigma-1)^{1/2}}+o(1)\right)\frac{(\log N)^{1-\sigma}}{\log \log N}\right)
\end{align*}
since the optimal choice for $c$ is
\[
c = \left(m^2\frac{3-2\sigma}{2\sigma-1}\right)^{\frac{1}{2\sigma}}.
\]
This completes the proof in the case $\sigma>1/2$.\\

Now we assume that $\sigma=1/2$, and we define
\[
f(p) = 
\begin{cases}
\frac{\sqrt{L}}{\sqrt{p}\log p},&\text{if $p\in [L,M]$;}\\
0&\text{otherwise},
\end{cases}
\]
where, as before, $L$ and $M$ depend on $N$ and $L=o(M)$. As in the case $\sigma>1/2$ one can show that the ratio of the error term to the main term in \eqref{eq:ratio} is $o(1)$, provided that $M/L$ has sufficiently small order of growth depending on $E(x)$ and
\[
L = \kappa^{-1}\log N\log\log N.
\]
For this choice of $L$, by the fact that $f(p)^2|a_L(p)|^2=o(1)$, the ratio in \eqref{eq:largeRatio} is
\[
\geq \exp\left((1+o(1))\sqrt{L}\sum_p \frac{|a_L(p)|^2}{p\log p}\right) = \exp\left((1+o(1))\sqrt{\frac{\kappa{N}}{\log\log N}}\right),
\]
and the proof is complete.
\end{proof}

\section{Inhomogeneous Diophantine approximation. Proof of Theorem \ref{th:2}.} \label{sec_th2}

The classical Kronecker approximation theorem states that for real numbers
$\alpha_1, \dots, \alpha_n$ and for real numbers $\lambda_1, \dots, \lambda_n$
which are linearly independent over the rationals, for every given $\ve>0$ there
exists a real number $t$ such that 
$$
\|\lambda_k t - \alpha_k \| \leq \ve,
$$
where $\| \cdot \|$ denotes the distance to the nearest integer. A quantitative
form of this theorem in the homogeneous case $\alpha_1 = \dots = \alpha_n = 0$ is
at the core of Montgomery's proof of \eqref{mont}, and similarly a quantitative form in the inhomogeneous case, due
to Weber, is at the core of the argument of Pa{\'n}kowski and
Steuding. However, when carefully examining the argument in
\cite{PS} it turns out that what is required is not necessarily an
approximation result for the $\ell^\infty$ distance (which is represented by the
norm $\| \cdot \|$), but that an approximation result which holds ``on average''
in a certain sense is also suitable for this purpose. The subsequent lemma,
which is due to Chen \cite{chen}, provides such a result for the $\ell^2$-error in
inhomogeneous Diophantine approximation. The result is stated in a
multidimensional form in Chen's paper, but we only require it in the
one-dimensional setting. In the statement of the lemma, $M$ denotes a positive
integer.

\begin{lemma}[{\cite[Theorem 1 (i)]{chen}}] \label{lemmachen}
Let $\lambda_1, \dots, \lambda_n$ and $\beta_1, \dots, \beta_n$ be real
numbers, and assume that they have the property that for all integers $u_1,
\dots, u_n$ with $|u_j| \leq M$ the fact that
$$
u_1 \lambda_1 + \dots + u_n \lambda_n = 0
$$
implies that
$$
u_1 \beta_1 + \dots + u_n \beta_n \qquad \textrm{is an integer}.
$$
Then for all positive real numbers $\delta_1, \dots, \delta_n$ and for all real
numbers $T_1 < T_2$ we have
$$
\inf_{t \in [T_1,T_2)} \sum_{j=1}^n \delta_j \|\lambda_j t - \beta_j\|^2 \leq
\frac{\Delta}{4} \sin^2 \left(\frac{\pi}{2 (M+1)}\right) + \frac{\Delta M^n}{4 \pi \Lambda
(T_2-T_1)},
$$
where
$$
\Delta = \sum_{j=1}^n \delta_j
$$
and
\begin{eqnarray*}
\Lambda & = & \min \Big\{|u_1 \lambda_{1}+ \dots + u_n \lambda_{n}|:~u_j
\textrm{~are integers with~} \\
& & \qquad \qquad \qquad \qquad |u_j| \leq M,~1 \leq j \leq n, \quad
\textrm{and} \quad u_1 \lambda_1 + \dots + u_n \lambda_n \neq 0.\Big\}
\end{eqnarray*}
\end{lemma}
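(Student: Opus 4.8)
The plan is to prove the bound by an averaging (``resonator'') argument. Since $F(t) := \sum_{j=1}^{n} \delta_j \|\lambda_j t - \beta_j\|^2 \geq 0$, for any nonnegative weight $W$ with $\int_{T_1}^{T_2} W > 0$ one has $\inf_{t \in [T_1,T_2)} F(t) \leq \left(\int_{T_1}^{T_2} F W\right)\big/\left(\int_{T_1}^{T_2} W\right)$, so everything reduces to choosing $W$ well. I would take $W(t) = \prod_{j=1}^{n} \Phi(\lambda_j t - \beta_j)$, where $\Phi$ is the $1$-periodic nonnegative trigonometric polynomial of degree $\leq M-1$ minimising $\int_0^1 \Phi(x) \sin^2(\pi x)\,dx$ subject to $\int_0^1 \Phi = 1$. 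Writing $e(x) := e^{2\pi i x}$ and using the Fejér--Riesz representation $\Phi = |g(e(\cdot))|^2$, the quantity to be minimised is the Rayleigh quotient of multiplication by $\sin^2(\pi x)$ compressed to polynomials of degree $\leq M - 1$, hence the least eigenvalue of the $M\times M$ tridiagonal Toeplitz matrix with $\tfrac12$ on the diagonal and $-\tfrac14$ off it; this eigenvalue equals $\sin^2\!\big(\tfrac{\pi}{2(M+1)}\big)$, the extremal $g$ has strictly positive coefficients, and therefore $\Phi$ satisfies $\widehat\Phi(k) \geq 0$ for all $k$, $\widehat\Phi(k) = 0$ for $|k| \geq M$, $\widehat\Phi(0) = 1$, $\widehat\Phi(1) = 1 - 2\sin^2\!\big(\tfrac{\pi}{2(M+1)}\big) = \cos\!\big(\tfrac{\pi}{M+1}\big)$, and $\Phi(0) \ll M$.

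Expanding $W$ as a trigonometric polynomial in $t$, its frequencies are precisely the $\sum_j u_j \lambda_j$ with $|u_j| \leq M - 1$. For the denominator I would isolate the frequency-zero part: it equals $\sum_{u\,:\,\sum u_j \lambda_j = 0} \big(\prod_j \widehat\Phi(u_j)\big)\, e\big({-}\sum u_j \beta_j\big)$, and the hypothesis of the lemma, applied to each such $u$ (which has $|u_j| \leq M$), forces $\sum u_j \beta_j \in \mathbb{Z}$, so all these phases are $1$ and the frequency-zero part is $\geq \widehat\Phi(0)^{n} = 1$ by nonnegativity of the Fourier coefficients. Every nonzero frequency has modulus $\geq \Lambda$ and contributes at most $\tfrac{1}{\pi\Lambda}$ to the integral, while the total mass of the Fourier coefficients of $W$ is $\Phi(0)^{n} \ll M^{n}$; hence $\int_{T_1}^{T_2} W \geq (T_2 - T_1) - O(M^{n}/\Lambda)$. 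If this is not positive, then the right-hand side of the lemma already exceeds the trivial bound $F \leq \Delta/4$, so we may assume the denominator is $\gg T_2 - T_1$.

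For the numerator I would use the elementary inequality $\|x\|^2 \leq \tfrac18(1 - \cos 2\pi x)$ (equivalent to $|\sin \pi x| \geq 2\|x\|$), giving
\[
\int_{T_1}^{T_2} F(t) W(t)\,dt \;\leq\; \sum_{j=1}^{n} \delta_j \left( \tfrac18 \int_{T_1}^{T_2} W \;-\; \tfrac18 \Re\, e({-}\beta_j) \int_{T_1}^{T_2} W(t)\, e(\lambda_j t)\,dt \right).
\]
Each shifted integral $\int_{T_1}^{T_2} W(t) e(\lambda_j t)\,dt$ is treated exactly as the denominator: after the shift the relevant frequencies are $\sum_i v_i \lambda_i$ with $|v_i| \leq M - 1$ for $i \neq j$ and $v_j \in [2-M, M]$, so still $|v_i| \leq M$; the Kronecker-type hypothesis again fixes all frequency-zero phases (now to the common value $e(\beta_j)$), the leading frequency-zero term is $e(\beta_j)\widehat\Phi(1) = e(\beta_j)\cos\!\big(\tfrac{\pi}{M+1}\big)$, and the remaining frequencies cost only $O(M^{n}/\Lambda)$. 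Dividing numerator by denominator, each index $j$ contributes at most $\tfrac18\big(1 - \cos\tfrac{\pi}{M+1}\big) + O\!\big(\tfrac{M^{n}}{\Lambda(T_2 - T_1)}\big) = \tfrac14 \sin^2\!\big(\tfrac{\pi}{2(M+1)}\big) + O\!\big(\tfrac{M^{n}}{\Lambda(T_2 - T_1)}\big)$, and summing with weights $\delta_j$ gives the claimed bound, provided the implicit constants are tracked carefully enough to fit inside $\tfrac{\Delta M^{n}}{4\pi\Lambda(T_2 - T_1)}$ — for which the slack afforded by $\Phi(0) \ll M$ and by $\tfrac18$ versus $\tfrac14$ should suffice.

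I expect two genuinely delicate points. First, the sharp single-variable constant: one must recognise the extremal problem, compute the tridiagonal eigenvalue exactly, and at the same time check the positivity of $\widehat\Phi$ and the bound $\Phi(0) \ll M$ used to control the off-diagonal terms. Second, and more seriously, the case where the $\lambda_j$ admit nontrivial integer relations with coefficients bounded by $M$: then the frequency-zero parts of both the numerator and the denominator are sums over all such relations, not single terms, and one has to verify that their ratio still obeys the clean inequality $\geq \cos(\tfrac{\pi}{M+1})$ — essentially a comparison of $\widehat\Phi(v_j - 1)$ with $\widehat\Phi(v_j)$ summed over the relation set — while invoking the lemma's hypothesis consistently for every relation that occurs. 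In the generic case (no relations, e.g.\ when the $\lambda_j$ are logarithms of distinct primes, which is the setting of Theorem~\ref{th:2}) the frequency-zero part is simply $1$ and there is nothing extra to do.
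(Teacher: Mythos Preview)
The paper does not give a proof of this lemma: it is simply quoted as Theorem~1(i) of Chen~\cite{chen} and used as a black box in the proofs of Theorems~\ref{th:2} and~\ref{th:3}. There is therefore no argument in the paper against which to compare your proposal.

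For what it is worth, your sketch is in line with the paper's own remark that Chen's result ``is itself proved \dots\ using a probabilistic method which has some resemblance of a `resonance' argument''. The averaging against $W=\prod_j\Phi(\lambda_j t-\beta_j)$, with $\Phi$ the extremal nonnegative trigonometric polynomial for $\int_0^1\Phi(x)\sin^2(\pi x)\,dx$, is indeed the mechanism that produces the sharp constant $\sin^2\!\big(\tfrac{\pi}{2(M+1)}\big)$ as the least eigenvalue of the $M\times M$ tridiagonal Toeplitz matrix with diagonal $\tfrac12$ and off-diagonal $-\tfrac14$. You also correctly isolate the two places where real work remains: pinning down the exact error constant $\tfrac{1}{4\pi}$ (this comes down to the inequality $\Phi(0)\le M$ for the extremal kernel, which needs a short explicit computation rather than just $\Phi(0)\ll M$), and, more substantially, handling the frequency-zero contributions when the $\lambda_j$ admit nontrivial integer relations with coefficients bounded by $M$. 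Your sketch is thinnest at this second point; note, however, that in the only applications made in this paper the $\lambda_j$ are logarithms of distinct primes, hence $\mathbb{Q}$-linearly independent, and only the generic (relation-free) case is ever invoked.
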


The subsequent lemma follows from a combination of \cite[Corollary 4.2]{PS}
with equation (7) of \cite{PS}. In the statement of the lemma and in the sequel, $\delta$
denotes the number from axiom (iv) of the definition of the Selberg class (where
it is assumed that $\delta<1/2$) and the numbers $\omega_p$ are such that $a_L(p) = |a_L(p)| e^{i \omega_p}$.

\begin{lemma} \label{lemma42}

Let $s_0 = \sigma+it_0$, and assume that $\sigma \in [1/2,1)$ and $t_0 \geq
15$. Furthermore, assume that $L(\sigma' + it) \neq 0$ for $\sigma' > \sigma$ and $|t-t_0|
\leq 2 (\log t_0)^2$. Then for $x > 2$ we have
\begin{align} 
\Re e^{-i\theta} \log L(s_0 + i t_1) & \geq \frac{1}{2} \sum_{\left| \log
\frac{p}{x} \right| \leq 1} \frac{|a_L(p)|}{p^{\sigma}} \cos (t_0 \log p -
\omega_p) \left(1 - \left| \log \frac{p}{x} \right|\right) \label{lemma42equ}\\
& \quad + \mathcal{O} \left(
2 x(\log t_0)^{-2} \right) + \mathcal{O} \left( x^{2\delta-2\sigma+1/2} \log x \right) \nonumber
\end{align}
for some $t_1 \in \left[-(\log t_0)^2,(\log t_0)^2\right]$.\footnote{The second error term, which contains the contribution of the numbers $n = p^l$ for $l \geq 2$, is given as $x^\delta/(\log x)$ in \cite{PS}. However, this is not necessarily smaller than the main term, which we will show to be of size roughly $x^{1-\sigma}$. Still, some calculations show that the error term can actually be bounded by $\mathcal{O} \left( x^{2 \delta - 2 \sigma + 1/2} \log x \right)$. Since by assumption $\delta<1/2$ we have $2 \delta - 2 \sigma + 1/2 < 1 - \sigma$, as necessary. In the following lines we show how to obtain this upper bound. We have to give an upper bound for $$
\sum_{\substack{|\log \frac{n}{x}| \leq 1,\\n = p^l,~l \geq 2}} \frac{|b(n)|}{n^{\sigma}}.
$$ 
Obviously we can assume that $l \ll \log x$. We know (axiom (iv)) that $|b(p^k)| \ll p^{k\delta}$. Let's start with $l=2$. The contribution is at most $$
\sum_{e^{-1} x \leq p^2 \leq ex} \frac{|b(p^2)|}{(p^2)^{\sigma}} \ll x^{1/2} \frac{x^{2 \delta}}{x^{2 \sigma}} \ll x^{2 \delta - 2 \sigma + 1/2}.
$$
In a similar way, for $l=3$ we get a contribution of at most
$$
x^{3 \delta - 3 \sigma + 1/3} \ll x^{2 \delta - 2 \sigma + 1/2}, 
$$
since $\delta < \sigma$. We get similar bounds for the contribution for larger values of $l$, and, as noted, we can assume that $l \ll \log x$. Thus the total error is at most
$$
x^{2 \delta - 2 \sigma + 1/2} \log x.
$$}
\end{lemma}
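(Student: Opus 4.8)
The statement is, as the authors point out above, essentially \cite[Corollary~4.2]{PS} together with equation~(7) of \cite{PS}; the only genuinely new point is the sharper estimate for the contribution of the higher prime powers recorded in the footnote, so the plan is to recall the mechanism behind those results and then re‑run the prime‑power computation. The mechanism is a smoothed Perron/explicit‑formula representation of $\log L$ near $s_0$. Writing $\log L(s)=\sum_{p,\ell\ge 1}b(p^\ell)p^{-\ell s}$ (absolutely convergent for $\Re s>1$, with $b(p)=a_L(p)$), one has for $\Re w=1$
\[
\frac{1}{2\pi i}\int_{(1)}\log L(s_0+i\tau+w)\,K_x(w)\,dw=\sum_{\substack{n=p^\ell\\ x/e\le n\le ex}}\frac{b(p^\ell)}{n^{\,s_0+i\tau}}\Bigl(1-\bigl|\log\tfrac nx\bigr|\Bigr),
\]
where $K_x(w)=x^{w}\bigl(2\sinh(w/2)/w\bigr)^2$ is the multiplicative form of the Fej\'er kernel; $K_x$ is entire, and on the line $\Re w=0$ it equals $x^{iu}(\sin(u/2)/(u/2))^2$, a nonnegative approximate identity of mass $1$.

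One then shifts the contour towards $\Re w=0$. The hypothesis that $L(\sigma'+it)\ne 0$ for $\sigma'>\sigma$ and $|t-t_0|\le 2(\log t_0)^2$ is exactly what keeps $\log L(s_0+i\tau+w)$ holomorphic throughout the relevant strip, and, combined with the Phragm\'en--Lindel\"of bound \eqref{fromPhL} (bounding $\log|L|$ from above), a Borel--Carath\'eodory‑type argument, and a standard bound for $\arg L$, it forces $\log L=\mathcal O(\log t_0)$ on the shifted contour. The portion of $K_x$ with $|\Im w|\gtrsim(\log t_0)^2$ then contributes only $\mathcal O(x(\log t_0)^{-2})$, which is the first error term; on the remaining range one twists by $e^{-i\theta}$, averages over a suitable short subinterval of the window $|\tau|\le(\log t_0)^2$, and uses that the relevant kernel is nonnegative of total mass $1$ to bound the weighted average of $\Re e^{-i\theta}\log L$ by its maximum over the window. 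This is the origin of the clause ``for some $t_1$'' and of the factor $\tfrac12$, the choice of $t_1$ also being used to align the phase so that $t_0$ (rather than $t_0+t_1$) and no $\theta$ appear inside the cosine on the right‑hand side.

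In the resulting localized Dirichlet polynomial one separates the genuine primes $\ell=1$ from the prime powers $\ell\ge 2$. After taking $\Re e^{-i\theta}$ and using $a_L(p)=|a_L(p)|e^{i\omega_p}$, the $\ell=1$ terms produce the main term
\[
\tfrac12\sum_{|\log(p/x)|\le 1}\frac{|a_L(p)|}{p^{\sigma}}\cos(t_0\log p-\omega_p)\Bigl(1-\bigl|\log\tfrac px\bigr|\Bigr),
\]
while the $\ell\ge 2$ terms are bounded in modulus by $\sum_{|\log(p^\ell/x)|\le1,\ \ell\ge2}|b(p^\ell)|p^{-\ell\sigma}$. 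Here I would use $|b(p^\ell)|\ll p^{\ell\delta}$ from axiom~(iv): the $\ell=2$ block is $\ll x^{1/2}\cdot x^{2\delta}x^{-2\sigma}=x^{2\delta-2\sigma+1/2}$, the $\ell=3$ block is $\ll x^{3\delta-3\sigma+1/3}$, and so on, with only $\mathcal O(\log x)$ relevant values of $\ell$; since $\delta<1/2\le\sigma$ each block is $\ll x^{2\delta-2\sigma+1/2}$, and the total is $\mathcal O(x^{2\delta-2\sigma+1/2}\log x)$. This is the second error term; it replaces the bound $x^{\delta}/\log x$ claimed in \cite{PS}, and since $2\delta-2\sigma+1/2<1-\sigma$ it is dominated by the main term (which is of size $\asymp x^{1-\sigma}$ in the applications).

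The main obstacle is the contour‑shift and phase‑alignment step of \cite{PS}: one has to exploit the zero‑free window of radius $\asymp(\log t_0)^2$ quantitatively --- both to keep $\log L$ holomorphic along the whole shifted contour and to bound $\arg L$ (hence $\log L$) there, so that the tail of $K_x$ really contributes only $\mathcal O(x(\log t_0)^{-2})$ --- and to choose $t_1$ in the window so that the oscillatory factor $(x/p^\ell)^{i\tau}$ introduced when moving from the base abscissa $t_0$ to the nearby point $t_0+t_1$ is negligible and the phase comes out as stated. Granting the analysis of \cite{PS}, the only computation that must actually be redone is the prime‑power estimate above, which is the content of the footnote.
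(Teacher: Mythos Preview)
Your proposal is correct and matches the paper's own treatment: the paper does not prove this lemma from scratch but simply invokes \cite[Corollary~4.2]{PS} together with equation~(7) of \cite{PS}, and the only new computation is the prime-power error bound in the footnote, which you reproduce exactly. Your sketch of the underlying Fej\'er-kernel/contour-shift mechanism from \cite{PS} is more detailed than anything the paper itself supplies, but the substantive content---the $\ell\ge 2$ estimate yielding $\mathcal{O}(x^{2\delta-2\sigma+1/2}\log x)$---is identical.
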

~\\

\begin{proof}[Proof of Theorem~\ref{th:2}]
Throughout the rest of this section we assume that all the conditions required for the validity of Theorem~\ref{th:2} are satisfied. Let $T$ be given, and assume that $T$ is ``large''. We will apply Lemma \ref{lemma42} with 
$$
x = B \log T,
$$
where $B$ is a positive number that will be chosen later. Let $p_1, \dots, p_n$
denote the primes in the interval $[x/e,ex]$. Then we can write the sum in
\eqref{lemma42equ} as
\begin{equation*}
\sum_{j=1}^n \frac{|a_L(p_j)|}{p_j^{\sigma}} \cos (t_0 \log p_j -
\omega_{p_j}) \left(1 - \left| \log \frac{p_j}{x} \right|\right) +  \mathcal{O} \left(
2 x(\log t_0)^{-2} \right) + \mathcal{O} \left( x^{\delta-\sigma+1/2} \log x \right)
\end{equation*}
We will use Lemma \ref{lemmachen} with $M=4$, 
$$
\lambda_j = \frac{\log p_j}{2\pi}, \qquad \beta_j = \frac{\omega_j}{2\pi}, 
$$
and
$$
\delta_j = \frac{|a_L(p_j)|}{p_j^{\sigma}}\left(1 - \left| \log \frac{p_j}{x}
\right|\right).
$$
Then the first condition of the lemma is satisfied due to the linear
independence of the logarithms of the primes. For the number $\Lambda$ we get
the lower bound
$$
e^{2 \pi \Lambda} \geq \frac{\left(\prod_{j=1}^n p_j\right)^{4} +
1}{\left(\prod_{j=1}^n p_j\right)^{4}},
$$
which implies that
\begin{equation} \label{Lambda}
\Lambda \gg e^{-(1+\ve) e x} = e^{-(1+\ve) 4 e B \log T}
\end{equation}
by the prime number theorem (for any fixed $\ve > 0$). Thus by Lemma \ref{lemma42} for any two numbers $T_1 < T_2$ we have
$$
\inf_{t \in [T_1,T_2)} \sum_{j=1}^n \delta_j \left\|\frac{t \log p_j - \omega_j}{2 \pi} \right\|^2 \leq
\frac{\Delta}{4} \sin^2 \left(\frac{\pi}{10}\right) + \frac{4^n \Delta}{4 \pi
\Lambda
(T_2-T_1)},
$$
where $\Delta = \sum_{j=1}^n \delta_j.$ Note that
$$
\cos y \geq 1 - 2 \pi^2 \left\| \frac{y}{2 \pi} \right\|^2, \qquad y \in
\mathbb{R}.
$$
Thus we obtain
\begin{eqnarray*}
\sum_{j=1}^n \delta_j \cos (t_0 \log p_j - \omega_{p_j}) & \geq & \Delta 
\underbrace{\left(1 - \frac{\pi^2}{2} \sin^2
\left(\frac{\pi}{10}\right)\right)}_{> 0.52} - \frac{4^n \Delta}{4 \pi \Lambda
(T_2-T_1)},
\end{eqnarray*}
and, if we can guarantee that
\begin{equation} \label{delta}
\frac{4^n}{4 \pi \Lambda
(T_2-T_1)} \leq \frac{1}{100},
\end{equation}
then we have
\begin{equation} \label{delta2}
\sum_{j=1}^n \delta_j \cos (t_0 \log p_j - \omega_{p_j}) \geq 0.51 \Delta.
\end{equation}

Choose $\mu < \eta$, where $\eta$ is the number from \eqref{zerod}, and
assume that $B$ satisfies $4 e B < \mu$. Furthermore, define
$$
T^{(r)} = \left[T + (r-1)T^\mu, T + rT^\mu \right), \qquad 1 \leq r \leq
T^{1-\mu}.
$$
Then by \eqref{Lambda} and since $n \ll (\log T)/(\log \log T)$ for sufficiently large $T$ we have
$$
T^{\mu} \geq 100\frac{4^n}{4 \pi \Lambda},
$$
which means that \eqref{delta} holds for $T_1$ and $T_2$ being the left and
right endpoints of an interval $T^{(r)}$, respectively. Furthermore, by
\eqref{zerod}, for sufficiently large $T$ there exists an index $\overline{r}
\in [2,T^{1-\mu}-1]$ such that $L(\sigma' + it) \neq 0$ for $\sigma' > \sigma$ and $t \in
\left( T^{\overline{r}-1} \cup T^{\overline{r}} \cup T^{\overline{r}+1}
\right)$. Thus by Lemma \ref{lemma42} and \eqref{delta2} we have
$$
\Re e^{-i\theta} \log L(s_0 + i t_1) \geq 0.51 \Delta + \mathcal{O} \left(
2 x(\log t_0)^{-2} \right) + \mathcal{O} \left( x^{\delta-\sigma+1/2} \log x \right)
$$
for some $t_0 \in T^{\overline{r}}$ and $t_1 \in \left[-(\log t_0)^2,(\log
t_0)^2\right]$. In particular we have
\begin{equation} \label{lowerb}
\max_{T \leq t \leq 2T} \Re e^{-i\theta} \log L(\sigma + i t) \geq 0.505 \Delta,
\end{equation}
provided that $T$ is sufficiently large.\\

It remains to give a lower bound for $\Delta$. We have
$$
\Delta = \sum_{\left| \log \frac{p}{B \log T} \right| \leq 1}
\frac{|a_L(p)|}{p^{\sigma}}\left(1 - \left| \log \frac{p}{B \log T}
\right|\right),
$$
In this sum everything is non-negative. Thus a lower bound for $\Delta$ is
\begin{eqnarray}
\sum_{|\log \frac{p}{B \log T}| \leq 1/2} \frac{|a_L(p)|}{p^{\sigma}} \underbrace{\left(1 - \left| \log \frac{p}{B \log T} \right| \right)}_{\geq 1/2} \geq \sum_{|\log \frac{p}{B \log T}| \leq 1/2}  \frac{|a_L(p)|}{2 (e^{1/2} B \log T)^{\sigma}}. \label{sumsigma0}
\end{eqnarray}
By \eqref{prime} we have
$$
\sum_{|\log \frac{p}{B \log T}| \leq 1/2} |a_L(p)| \sim \left( e^{1/2} - e^{-1/2} \right) \kappa \frac{B \log T}{\log \log T}.
$$
Combining this with \eqref{lowerb} and \eqref{sumsigma0} and choosing $\mu$ and $B$ such that $B$ is only slightly smaller than $\eta/(4 e)$  we obtain
\begin{eqnarray*}
& & \max_{T \leq t \leq 2T} \Re e^{-i\theta} \log L(\sigma + i t) \\
& \geq & 0.505 \left( e^{1/2} - e^{-1/2} \right) (1+o(1)) \kappa \frac{B \log T}{\log \log T} \frac{1}{2 (e^{1/2} B \log T)^{\sigma}} \\
& \geq & \frac{\left(1 - e^{-1} \right) \kappa}{4} \left(\frac{\eta}{4 \sqrt{e}} \right)^{1-\sigma} \frac{(\log T)^{1-\sigma}}{\log \log T},
\end{eqnarray*}
for sufficiently large $T$, which proves Theorem \ref{th:2}.
\end{proof}

\section{Upper bounds.} \label{sec_prop1}

This section deals with upper bounds for possible large values of $L$-functions. First, we will discuss what one can possibly gain by adopting our method from Section \ref{sec_th1} and using a different resonator function $r(n)$. It will turn out that by doing so we may only improve the constant $C_L(\sigma)$ in Theorem \ref{th:1}. In other words, we cannot expect to get a lower bound greater than $\max_{t\in[T,2T]}|L(\sigma+it)|\geq\exp\left(c(\log T)^{1-\sigma}/(\log\log T)^{\theta(\sigma)}\right)$, unless we carry out some significant modifications of the proof. We shall focus only on the case $\sigma>1/2$, for which it is more reasonable to ask for possible improvements. Nevertheless, one can easily adopt this argument for $\sigma=1/2$ to get similar conclusion.\\

Indeed, using the notation from Section \ref{sec_th1} it suffices to estimate the ratio
\begin{equation} \label{quotient1}
\left|\sum_{mk=n\leq N}\frac{a_L(k)r(m)\overline{r(n)}}{k^\sigma}\right|\Bigg/\sum_{n\leq N}|r(n)|^2.
\end{equation}

Note that for every positive real function $g(k)$ we have $2|r(m)r(mk)|\leq |r(mk)|^2/g(k)+g(k)|r(m)|^2$. Thus for any such function $g$ the numerator of \eqref{quotient1} is bounded above by
\begin{equation}\label{eq:I1upper}
\frac{1}{2}\sum_{n\leq N}|r(n)|^2\left(\sum_{k\leq N/n}\frac{g(k)|a_L(k)|}{k^\sigma}+\sum_{k|n}\frac{|a_L(k)|}{k^\sigma g(k)}\right).
\end{equation}

Now let us put
\[
g(k) = \begin{cases}|a_L(k)|f(k),&a_L(k)\ne 0,\\f(k)&\text{otherwise,}\end{cases}
\]
where $f(k)$ is a multiplicative function such that $f(p^k)=\min(1,(L/p^k)^\beta)$ with $1-\sigma<\beta<\sigma$ and $L=\log N$.

Then, noticing that $f(p^k)\leq f(p)$ for every prime $p$ and any positive integer $k$, the assumption of the Selberg normality conjecture \eqref{eq:SNCthm} gives
\begin{align}
\sum_{k\leq N/n}\frac{g(k)|a_L(k)|}{k^\sigma}&\leq\prod_p\left(1+\sum_{k\geq 1}\frac{|a_L(p^k)|^2f(p^k)}{p^{k\sigma}}\right) \nonumber\\
&\leq\exp\left(\sum_{p\leq L}\frac{|a_L(p)|^2}{p^\sigma}+L^\beta\sum_{p> L}\frac{|a_L(p)|^2}{p^{\sigma+\beta}}+\mathcal{O}(1)\right) \nonumber\\
&\ll\exp\left((\kappa+o(1))\left(\frac{L^{1-\sigma}}{(1-\sigma)\log L}+\frac{L^{1-\sigma}}{(\sigma+\beta-1)\log L}\right)\right) \nonumber\\
&\ll\exp\left((\kappa+o(1))\frac{\beta(\log N)^{1-\sigma}}{(1-\sigma)(\sigma+\beta-1)\log\log N}\right). \label{eq:I2upper}
\end{align}

Next, from the definition of $f(k)$, we have for $n\leq N$ that
\begin{align*}
\sum_{k|n}\frac{|a_L(k)|}{k^\sigma g(k)}&\leq \prod_{p^a||n}\left(1+\sum_{1\leq j\leq a}\frac{1}{L^\beta p^{(\sigma-\beta)j}}\right)\prod_{p|n}\left(1+\frac{1}{p^\sigma-1}\right)\\
&\leq \exp\left(L^{-\beta}\sum_{p|n}\frac{1}{p^{(\sigma-\beta)}-1}+\sum_{p|n}\frac{1}{p^\sigma-1}\right)\\
&=\exp\left((1+o(1))\frac{(2-2\sigma+\beta) (\log N)^{1-\sigma}}{(1-\sigma)(1-\sigma+\beta)\log\log N}\right).
\end{align*}
Therefore, from \eqref{eq:I1upper}, \eqref{eq:I2upper}, and the definitions of $M_1$ and $M_2$ we see that this method can ensure the existence of large values of $L$-functions of size at most
\[
\exp\left(c\frac{(\log T)^{1-\sigma}}{\log\log T}\right).
\]
Thus our resonator function was already chosen in a way which is essentially optimal; this is also in accordance with the results in \cite{H}. The only possibility for a significant improvement seems to be to increase the value of $N$ far beyond $T^{1-\varepsilon}$, as in \cite{A,BS} in the context of large values of the Riemann zeta function. However, the method of constructing a ``sparse'' extremely long resonator cannot be transferred from the Riemann zeta function to general L-functions, since it depends crucially on the fact that all coefficients in the Dirichlet series representation of $\zeta$ are positive reals.\\

Now let us prove Proposition \ref{prop:1}, which states that, in general, it is impossible to find large values greater than $\exp\left(c(\log T)^{2-2\sigma}/(\log\log T)\right)$ as long as we assume the truth of an analogue of the Riemann hypothesis for a given $L$-function. 

\begin{proof}[Proof of Proposition \ref{prop:1}]
We closely follow the proof of \cite[Theorem 14.5]{T}, where it is shown that our assertion holds for the Riemann zeta-function. Hence we shall be very sketchy (see also \cite[p.74--75]{CG1}).\\

It is easy to see that using the Borel--Carath\'eodory theorem one has
\[
\max_{|z-2-it|<3/2-\delta'}|\log L(z)|\ll \frac{1}{\delta'}\log t,
\]
since $\Re\log L(z)\ll \log t$ if $|z-2-it|<(3-\delta')/2$. Moreover, since $a_L(p)=b(p)$ and $b(p^k)\ll p^{k\delta}$ for some $\delta<1/2$, assumption \eqref{prime_prop} implies that
\begin{align*}
\max_{x>1+\eta}|\log L(x+it)| &\leq \sum_p\sum_{k\geq 1}\frac{|b(p^k)|}{p^{k(1+\eta)}}\leq \sum_p \frac{|a_L(p)|}{p^{1+\eta}}+\sum_p\sum_{k\geq 2}\frac{1}{p^{k(1-\delta)}}\\
&\leq (1+\eta)\int_2^\infty \frac{\sum_{p\leq u}|a_L(p)|}{u^{2+\eta}} + \mathcal{O}(1)  \ll \frac{1}{\eta},
\end{align*}
provided $\eta$ is sufficiently small.  Hence, using Hadamard's three-circles theorem and taking $\delta'=\eta=(\log\log t)^{-1}$ we obtain that
\begin{equation}\label{eq:fromHadamard}
\log L(\sigma+it)\ll (\log t)^{2-2\sigma}\log\log t\qquad\text{for $\ \frac{1}{2}+\frac{1}{\log\log t}\leq \sigma \leq 1$.}
\end{equation}

Now, recall that Kaczorowski and Perelli \cite{KP} proved that
\[
N_L(T) = \frac{d_L}{2}T\log T + c_L T+\mathcal{O}(\log T)\quad\text{for some $c_L>0$},
\]
where $N_L(T)$ counts the non-trivial zeros $\rho=\beta+i\gamma$ of $L(s)$ with $|\gamma|\leq T$. Moreover, as it was shown in \cite[Lemma 4]{AM} or \cite[Lemma 5]{Mu}, for $-5/2\leq \sigma\leq 7/2$,
\[
\frac{L'(s)}{L(s)} = \sum_{|t-\gamma|\leq 1}\frac{1}{s-1/2-i\gamma}+\mathcal{O}(\log|t|).
\]
Since the number of terms in the sum is $\ll \log t$, we obtain
\[
\frac{L'(s)}{L(s)}\ll \log t\qquad\text{if $\sigma\ne \frac{1}{2}$},
\]
and
\[
\frac{L'(s)}{L(s)}\ll \frac{\log t}{\min\{|t-\gamma\}}+\log t\qquad\text{uniformly for $-\frac{5}{2}\leq \sigma\leq \frac{7}{2}$}.
\]
Hence, for each interval $(n,n+1)$ we can find $t_n$ such that
\[
\frac{L'(s)}{L(s)}\ll (\log t)^2\qquad\text{uniformly for $-\frac{5}{2}\leq \sigma\leq \frac{7}{2}$,\ $t=t_n$}.
\]

Now let $\Lambda_L(n)$ denote the coefficients of $-{L'(s)}/{L(s)}$. Then, using a method of contour integration one can show that
\begin{align}
\sum_n\frac{\Lambda_L(n)}{n^s}e^{-\delta n} &=-\frac{1}{2\pi i}\int_{(2)}\Gamma(z-s)\frac{L'(z)}{L(z)}\delta^{s-z}dz\label{eq:contour}\\
&= -\frac{1}{2\pi i}\int_{(1/4)}\Gamma(z-s)\frac{L'(z)}{L(z)}\delta^{s-z}dz\nonumber\\
&\quad-\frac{L'(s)}{L(s)} -\sum_\rho \Gamma(\rho-s)\delta^{s-\rho}+\mathcal{O}(e^{-ct}).\nonumber
\end{align}
Hence, short calculations give 
\begin{equation} \label{twosums}
-\frac{L'(s)}{L(s)} = \sum_n\frac{\Lambda_L(n)}{n^s}e^{-\delta n} + \sum_\rho \Gamma(\rho-s)\delta^{s-\rho} + \mathcal{O}(\delta^{\sigma-1/4}\log t).
\end{equation}
Applying again the fact that $N_L(T+1)-N_L(T)\ll \log T$ gives that the second sum on the right-hand side of \eqref{twosums} is $\ll \delta^{\sigma-1/2}\log t$. In order to estimate the first sum on the right-hand side of \eqref{twosums} we use again \eqref{eq:contour} and move the path of integration to $\Re(z) = \sigma$. Then we have
\[
\sum_n\frac{\Lambda_L(n)}{n^s}e^{-\delta n}\ll \delta^{\sigma-1}.
\]
Taking $\delta=(\log t)^{-2}$ gives
\[
\frac{L'(s)}{L(s)} \ll (\log t)^{2-2\sigma},
\]
which, together with \eqref{eq:fromHadamard}, easily implies that
\[
\log L(s) \ll \frac{(\log t)^{2-2\sigma}}{\log\log t}.
\]

In order to get the upper bound for $\sigma=1$ it suffices to integrate \eqref{twosums} over the interval $[1,7/2]$ with $\delta=(\log t)^{-2}$. Then, by Ramanujan's conjecture and the fact that $a_L(p)\ll p^\varepsilon$, we have
\begin{align*}
\log L(1+it) &\leq \sum_{n\leq N} \frac{|a_L(p)|}{p} + O\left(\sum_{n> N}e^{-\delta n}\right) + \mathcal{O}(1)\\
&\leq \sum_{n\leq N} \frac{|a_L(p)|}{p} + O\left(\frac{1}{\delta e^{\delta N}}\right) + \mathcal{O}(1).
\end{align*}
From partial summation and \eqref{SPNT}, one can easily see that the first sum is $\leq \kappa\log\log N + \mathcal{O}(1)$. Thus taking $N = 1+[\log^3 t]$  completes the proof.
\end{proof}

\section{Further application of Chen's theorem. Proof of Theorem \ref{th:3}.} \label{sec_th3th4}

First note that for $\sigma>1$ we have
\[
\Re e^{-i\theta} \log L(s) \geq \sum_{p\leq x}\frac{|a_L(p)|}{p^{\sigma}}\cos\left(t\log p +\theta-\omega_p\right) - \sum_{p>x}\frac{|a_L(p)|}{p^\sigma} + \mathcal{O}(1),
\]
since $b(p^j)\ll p^{j\delta}$ for some $\delta<1/2$. 

Now, let $p_1,\ldots,p_n$ denote all primes not exceeding $x$. Then, as in the proof of Theorem \ref{th:2}, we use Lemma \ref{lemmachen} with $M^2=\log\log\log T$, $T_1=T$, $T_2=2T$, 
$$
\lambda_j = \frac{\log p_j}{2\pi}, \qquad \beta_j = \frac{\omega_{p_j}-\theta}{2\pi} \qquad\text{and}\qquad 
\delta_j = \frac{|a_L(p_j)|}{p_j^{\sigma}}.
$$
Then, for $B = 1/(2M)$, we get $M^n\Lambda^{-1}/T = \mathcal{O}(T^{-1/4})$ and 
\[
\max_{T\leq t\leq 2T} \sum_{p\leq x}\frac{|a_L(p)|}{p^{\sigma}}\cos\left(t\log p + \theta-\omega_p\right)\geq (1+\mathcal{O}(M^{-2}))\sum_{p\leq x}\frac{|a_L(p)|}{p^\sigma}.
\]

Next, one can easily get from \eqref{SPNT}  and the classical second mean value theorem that
\begin{align*}
\sum_{p>x}\frac{|a_L(p)|}{p^\sigma} &\leq \lim_{y\to\infty}(1+\sigma)\int_x^y\frac{\sum_{p\leq u}|a_L(p)|}{u^{1+\sigma}}du + \mathcal{O}(1)\\
&\ll \frac{1}{\log x}\int_x^\infty u^{-\sigma} du \ll\frac{x^{1-\sigma}}{(\sigma-1)\log x},
\end{align*}
which is $\mathcal{O}(1)$ if $\sigma\geq 1+\frac{1}{2\log x}$ and $\sigma\ll 1$.

Moreover, by partial summation, we get
\begin{align*}
\sum_{p\leq x}\frac{|a_L(p)|}{p^\sigma} \geq \int_2^x \frac{\sum_{p\leq u}|a_L(p)|}{u^{\sigma+1}}du + \mathcal{O}(1),
\end{align*}
and again \eqref{SPNT} yields
\begin{align*}
\sum_{p\leq x}\frac{|a_L(p)|}{p^\sigma} &= \int_2^x \frac{\kappa}{u^{\sigma+1}\log u}du + O\left(\int_2^x\frac{du}{u\log^2 u}\right)+\mathcal{O}(1)\\
&= \kappa\li(x^{1-\sigma}) - \kappa\li(2^{1-\sigma}) + \mathcal{O}(1)= - \kappa\li(2^{1-\sigma}) + \mathcal{O}(1),
\end{align*}
where the last equality is fulfilled for $\sigma\geq 1+\frac{1}{2\log x}$.

Now, recall that it is well known (see for example \cite[Eq. (9)]{N}) that
\[
\li(\xi) = C+\log(-\log \xi) + \sum_{j=1}^\infty\frac{\log^j \xi}{j!j}
\]
for some positive constant $C$ and any $\xi$ with $0<\xi<1$. Therefore, $-\li(2^{1-\sigma}) = \log \frac{\log 2}{\sigma-1} + \mathcal{O}(1)$, and taking $\sigma=1+\frac{\log 2}{\log x}$ leads to
\[
\Re e^{i\theta} \log L(s) \geq (\kappa+\mathcal{O}(M^{-2})) \log\log x + \mathcal{O}(1),
\]
which proves \eqref{eq:Steuding} by recalling that $x= \frac{\log T}{2\sqrt{\log\log\log T}}$.

In order to show the second assertion, let us write $f(s) = L(s)/(\log\log s)^\kappa$ and assume that $f(1+it) = o(1)$. Obviously $f(2+it) = o(1)$. Therefore, by the Phragm\'en--Lindel\"of principle, we get a contradiction with \eqref{eq:Steuding}.
\section*{Acknowledgements.} 

We want to thank Titus Hilberdink and Maksym Radziwi{\l}{\l} for several helpful comments and discussions.

\end{document}